\numberwithin{equation}{section}
\theoremstyle{plain}
\newtheorem{Thm}{Theorem}[section]
\newtheorem*{Thm*}{Thm}
\newtheorem{Lem}[Thm]{Lemma}
\newtheorem{Cor}[Thm]{Corollary}
\theoremstyle{definition}
\newtheorem{Rem}[Thm]{Remark}
\newtheorem{?}[Thm]{Problem}
\newcommand{\R}{{\mathbb R}}
\newcommand{\A}{{\mathcal A}}
\newcommand{\Torus}{\mathbb{T}}
\newcommand{\p}{\partial}
\newcommand{\lap}{\triangle}
\newcommand{\ut}{\tilde{u}}
\newcommand{\ub}{\overline{u}}
\newcommand{\norm}[1]{\left\lVert#1\right\rVert}
\newcommand{\abs}[1]{\left\lvert#1\right\rvert}
\author[F. Huang and Q. Yuan]{Feimin Huang$^{1,2}$ ~~~~Qian Yuan$^{1,*}$\\[2cm]
{\tiny	Dedicated to the 80th Birthday of Ling Hsiao}}
\address{$^1$Academy of Mathematics and Systems Science,  CAS, Beijing 100190, China}
\address{$^2$ School of Mathematical Sciences, University of Chinese Academy of Sciences, Beijing 100049, China}
\thanks{*Corresponding Author}
\thanks{Feimin Huang is partially supported by NSFC Grant No. 11371349 and 11688101.}
\email{fhuang@amt.ac.cn}
\thanks{Qian Yuan is partially supported by the China Postdoctoral Science Foundation funded project	(2019M660831).}
\email{qyuan103@link.cuhk.edu.hk}
\title[Periodic perturbations]{Stability of planar rarefaction waves for scalar viscous conservation law under periodic perturbations}
\begin{document}

\begin{abstract}
The large time behavior of the solutions to a multi-dimensional viscous conservation law is considered in this paper. It is shown that the solution time-asymptotically tends to the planar rarefaction wave if the initial perturbations are multi-dimensional periodic. The time-decay rate is also obtained. Moreover, a Gagliardo-Nirenberg type inequality is established in the domain $ \R\times \mathbb T^{n-1} (n\geq2) $, where $\mathbb T^{n-1}$ is the $ n-1 $-dimensional torus.
\end{abstract}

\maketitle

\textbf{\small AMS Subject Classification}: {\small 35B65, 35L65, 35Q35}

{\small Keywords: Planar rarefaction wave, periodic perturbation, viscous conservation law}
\section{introduction}

We are concerned with  a scalar viscous conservation law, which reads in $\R^n$ as 
\begin{equation}{}\label{eq}
	 \p_t u(x,t) + \sum\limits_{i=1}^{n} \p_i \left(f_i(u(x,t))\right) = \lap u(x,t), \quad t>0, x \in \mathbb{R}^n, 
	\end{equation}
where  $ u(x,t) \in \R, $ $ \p_i := \p_{x_i} ~(i=1,2,..,n) $ and $ \lap = \sum\limits_{i=1}^n \p_i^2; $ the fluxes $f_i(u) ~ (i=1,2,..,n) $ are smooth and $ f_1''(u)\geq a_0 $ for some constant $ a_0>0$.

It is well known \cite{Lax1957,Smoller1994} that the one-dimensional (1-d) conservation law, i.e. the right hand side of \eqref{eq} is zero and $ n=1, $ has rich wave phenomena including shocks and rarefaction waves. 
A centered rarefaction wave $ u^R(x_1,t) $ is an entropy solution of the following Riemann problem 
\begin{equation}\label{eq-1}
\begin{cases}
\p_t u^R(x_1,t) + \p_1 f_1(u^R(x_1,t)) = 0,   \\
u^R(x_1,0) = \left\{\begin{array}{ll} \bar{u}_l, ~ x_1<0,\\
\bar{u}_r, ~x_1>0,
\end{array}
\right.
\end{cases}
\end{equation}
with $\bar{u}_l<\bar{u}_r$, and has an explicit formula as
\begin{equation}\label{eq-2}
u^R(x_1,t) = \begin{cases}
\bar{u}_l, & x_1 \leq f'_1(\bar{u}_l)t, \\
(f_1')^{-1}(\frac{x_1}{t}), \quad &  f'_1(\bar{u}_l)t<x_1 \leq f'_1(\bar{u}_r)t,\\
\bar{u}_r, & x_1>f'_1(\bar{u}_r)t.
\end{cases} 
\end{equation}
It is shown in \cite{IO1960,Liu1978,Xin1990,KNM2004} that the rarefaction wave given in \eqref{eq-2} is asymptotically stable for both the inviscid and viscous conservation laws in $L^2$ framework provided that the initial perturbations are $ L^2 $ integrable. 
The analysis for the solutions of conservation laws with periodic initial data is important,  cf. \cite{Lax1957,GL1970,Dafermos1995,Dafermos2003,XYY2019-1,XYY2019-2,YY2019}. 
In particular, the large time behavior of the nonlinear waves under periodic perturbations for viscous conservation laws is firstly 
investigated in \cite{XYY2019-1}  with the aid of maximum principle.  
The 1-d rarefaction wave $u^R(x_1,t)$ becomes the planar rarefaction wave in the multi-dimensional (m-d) case. The main purpose of this paper is to use the energy method to extend 
the work of \cite{XYY2019-1} to the m-d case, i.e., 
to show whether the planar rarefaction wave  
is stable or not under m-d periodic perturbations. 
Besides, we refer to \cite{HH2020, IO1960, KM1985, KMN1985, MN1985, MN1986, Goodman1986, LX1988, LX1997,HXY2008, LZ2009} and the references therein for the other interesting works about the stability of shocks, rarefaction waves and contact discontinuities.

\vspace{0.2cm}

Now we formulate the main result. Since the centered rarefaction wave given in \eqref{eq-2} is a weak solution and only Lipschitz continuous with respect to $x_1,$ we need to construct a viscous version of rarefaction wave $ \ut^R(x_1,t) $ to replace the original one. Following \cite{MN1986,Xin1990}, $ \ut^R(x_1,t) $ can be constructed as a smooth solution of the viscous conservation law \eqref{eq}, i.e.,
\begin{equation}\label{approx}
\begin{cases}
\partial_t \ut^R +\partial_1 (f_1(\ut^R))= \p_1^2 \ut^R, & \\
\ut^R(x_1, 0) = \frac{\ub_l+\ub_r}{2}+\frac{\ub_r-\ub_l}{2}\frac{e^{x_1}-e^{-x_1}}{e^{x_1}+e^{-x_1}}. &
\end{cases}
\end{equation}
It is straightforward to check that 
\begin{equation*}
\lim_{x_1\to -\infty} \ut^R(x_1,0)= \ub_l < \ub_r = \lim_{x_1\to +\infty} \ut^R(x_1,0).
\end{equation*}
Consider the scalar equation \eqref{eq} with the following initial data
\begin{equation}
 u(x, 0) = u_{0}(x) = \ut^R(x_1,0) + w_0(x), \qquad x \in \mathbb{R}^n,  \label{ic}	
\end{equation}
where  $ w_0(x)$ is a m-d periodic function defined on the n-d torus $ \Torus^n = \prod\limits_{i=1}^{n} [0, 1]. $ Without loss of generality (by adding the average constant onto $ \ub_l $ and $ \ub_r, $ respectively), one can assume that 
\begin{equation}\label{ave-0}
\int_{\Torus^n} w_0(x)dx = 0.
\end{equation}

We are ready to state the main theorem.
\begin{Thm}\label{Thm}
	Assume that the periodic perturbation $w_0(x) \in H^{\left[\frac{n}{2}\right] + 2}(\Torus^n) $ and satisfies \cref{ave-0}.
	
	Then there exists a unique global smooth solution $ u $ of \cref{eq}, \cref{ic} satisfying
	\begin{equation}\label{rate}
	\norm{u(x,t) - \ut^R(x_1,t)}_{L^\infty(\R^n)}\leq C (1+t)^{-\frac{1}{2}}, \quad t>0,
	\end{equation}
	where the constant $ C>0 $ is independent of $ t. $
\end{Thm}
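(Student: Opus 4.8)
The plan is to decompose the solution as $u(x,t) = \ut^R(x_1,t) + v(x,t)$, where the perturbation $v$ is expected to be periodic in all variables and to decay. The key structural observation is that, because the unperturbed equation with periodic data preserves periodicity and spatial average, $v$ should satisfy a scalar viscous conservation law with a $\ut^R$-dependent drift, namely $\p_t v + \p_1(f_1(\ut^R+v)-f_1(\ut^R)) + \sum_{i=2}^n \p_i(f_i(\ut^R+v)-f_i(\ut^R)) = \lap v$, and $v(\cdot,0)=w_0$ has zero average on $\Torus^n$. However, since the target decay rate $(1+t)^{-1/2}$ matches the known rate for rarefaction-wave stability under $L^2$ perturbations, the right strategy is probably the reverse: first show that the periodic component can be removed. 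Concretely, I would compare $u$ with the solution $\bar u$ of the same Cauchy problem but with the \emph{smoothed, decaying} data obtained by averaging, and separately control an auxiliary purely periodic problem. The cleanest route: introduce the solution $g(x,t)$ of the heat equation (or the linearized-about-constants problem) with data $w_0$; its zero average and periodicity give $\norm{g(\cdot,t)}_{H^s(\Torus^n)} \le C e^{-ct}$ by Poincaré on the torus. Then the difference $u - \ut^R - (\text{periodic corrector})$ lies in an $L^2(\R^n)$-type space to which the classical rarefaction stability theory applies.

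The main steps, in order, are as follows. (i) Construct the viscous rarefaction profile $\ut^R$ and record its standard properties: $0 \le \p_1 \ut^R \le C\min\{1,1/t\}$, $\norm{\ut^R(\cdot,t)-u^R(\cdot,t)}_{L^\infty} \to 0$, and decay of higher $x_1$-derivatives — these follow from \cite{MN1986,Xin1990}. (ii) Set up the perturbation equation for $v = u - \ut^R$ and establish local well-posedness in $\ut^R(x_1,0)+H^{[n/2]+2}(\Torus^n)$ via standard parabolic theory, noting that periodicity in $x$ is propagated and the average is conserved, so $\int_{\Torus^n} v\,dx = 0$ for all $t$. (iii) Perform the basic $L^2(\Torus^n)$ energy estimate: multiply by $v$ and integrate; the dangerous term is $\int (f_1'(\ut^R)-\text{const})\p_1 v \cdot v = -\frac12\int f_1''(\ut^R)\,\p_1\ut^R\, v^2$, which — because $f_1'' \ge a_0 > 0$ and $\p_1\ut^R \ge 0$ — has a \emph{good} sign and produces an extra damping term $\frac{a_0}{2}\int \p_1\ut^R\, v^2$. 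This is exactly the mechanism of Matsumura–Nishihara/Xin for rarefaction waves, and it is the heart of the argument. (iv) Close higher-order estimates up to order $[n/2]+2$ by differentiating and using the Gagliardo–Nirenberg inequality on $\R\times\Torus^{n-1}$ promised in the abstract to bound the nonlinear terms; the threshold $[n/2]+2$ is what makes $\norm{v}_{L^\infty}$ controllable. (v) Derive the time-decay: combine the dissipation $\int_0^t \norm{\nabla v}_{L^2}^2 + \int_0^t\int \p_1\ut^R\,v^2$ with a weighted energy estimate (multiplier involving $(1+t)$, as in KNM/Xin) to upgrade boundedness to the rate $(1+t)^{-1/2}$ in $L^2$, then interpolate with the higher-order bounds via Gagliardo–Nirenberg to get $\norm{v}_{L^\infty(\R^n)} \le C(1+t)^{-1/2}$, and finally $\norm{u-\ut^R}_{L^\infty} = \norm{v}_{L^\infty}$ since $v$ is exactly this difference.

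The main obstacle I anticipate is the combination of two difficulties absent in the classical $L^2(\R^n)$ theory: first, the perturbation does \emph{not} decay in $L^2(\R^n)$ (it is periodic, hence only $L^2$ locally), so all energy estimates must be done on the torus cross-section or with careful localization, and one cannot appeal to the usual $x_1\to\pm\infty$ decay when integrating by parts in the transverse variables $x_2,\dots,x_n$; second, the transverse fluxes $f_i$ ($i\ge2$) are \emph{not} assumed convex, so the good sign in step (iii) is available only for the $x_1$-direction — the transverse nonlinear terms $\p_i(f_i(\ut^R+v)-f_i(\ut^R))$ must be absorbed purely by the transverse dissipation $\sum_{i\ge2}\norm{\p_i v}^2$ and the zero-average (Poincaré) structure. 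Making the weighted estimate in step (v) work in this periodic-transverse setting — i.e., getting the $(1+t)^{-1/2}$ rate rather than merely boundedness — is where the real work lies, and it is presumably why the Gagliardo–Nirenberg inequality on $\R\times\Torus^{n-1}$ had to be established as a separate tool.
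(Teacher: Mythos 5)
Your proposal breaks down at its very first structural claim, and the subsequent steps inherit the problem. The perturbation $v=u-\ut^R$ is \emph{not} periodic in $x_1$ (its equation has non-periodic coefficients through $\ut^R(x_1,t)$), it is not integrable or decaying as $\abs{x_1}\to\infty$ (near $x_1=\mp\infty$ it behaves like the periodic oscillations riding on $\ub_l$ and $\ub_r$), and $\int_{\Torus^n}v\,dx$ is not even a meaningful conserved quantity. Hence your steps (ii)--(iii), ``$L^2(\Torus^n)$ energy estimate'' with conserved zero average, have no function space in which to live: integration by parts over $\R^n$ is illegal because $v$ does not vanish at infinity, and integration over a torus is illegal because $v$ is not $x_1$-periodic. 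Your fallback idea of one ``periodic corrector'' (heat flow or linearization with data $w_0$) does not repair this: the leftover source $\sum_i\p_i\big[f_i(\ut^R+w)-f_i(\ut^R)\big]$ contains $f_i'(\ut^R+w)\,\p_i w$, which is periodic in $x_1$, hence not in $L^1$ or $L^2$ of $\R\times\Torus^{n-1}$, so the corrected difference is still not in an $L^2(\R^n)$-type space; moreover the oscillations at the two ends evolve nonlinearly around the \emph{different} constants $\ub_l\neq\ub_r$ (different transport speeds $f_i'(\ub_{l,r})$), so a single corrector cannot cancel both. The paper's essential device, which is missing from your proposal, is the ansatz $\ut=u_l(1-g)+u_r g$ built from the \emph{two exact periodic solutions} $u_{l},u_r$ of the nonlinear equation with data $\ub_{l,r}+w_0$, glued by the weight $g=(\ut^R-\ub_l)/(\ub_r-\ub_l)$. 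Then $\phi=u-\ut$ vanishes at $t=0$, the error term $h$ is localized in $x_1$ through the factors $g(1-g)$ and $\p_1 g$ and decays exponentially in $L^1\cap L^\infty(\Omega)$ (because zero-average periodic solutions decay exponentially), and only then does the KNM-type time-weighted $L^p$ energy method on $\Omega=\R\times\Torus^{n-1}$ close and yield the rates $\norm{\phi}_{L^p}\leq C(1+t)^{-\frac12+\frac{1}{2p}}$, $\norm{\nabla\phi}_{L^p}\leq C(1+t)^{-1+\frac{1}{2p}}$, and finally \cref{rate} since $\ut-\ut^R=w_l(1-g)+w_r g$ is exponentially small.

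A second, independent gap is your use of the Gagliardo--Nirenberg inequality on $\R\times\Torus^{n-1}$: you invoke it as if the perturbation had zero transverse averages, but this property is not propagated by the perturbation equation (only the full-torus average of the initial datum is assumed zero, which is weaker), and without it the $n$-d GN inequality on $\Omega$ is simply false, as the paper's counterexample \cref{exam} shows. This is precisely why the paper proves the decomposition lemma and the GN-\emph{type} inequality \cref{G-N-type-2} with the extra lower-dimensional terms, and why the $L^\infty$ rate is obtained by interpolating $\phi$ and $\nabla\phi$ through that modified inequality rather than through high-order $H^{[\frac n2]+2}$ estimates of the perturbation (the $H^{[\frac n2]+2}$ regularity of $w_0$ is used only to get $W^{1,\infty}$ exponential decay of the periodic solutions $u_{l,r}$). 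So while your step (iii) correctly identifies the good sign from $f_1''\geq a_0$ and $\p_1\ut^R>0$, the argument as proposed cannot be carried out without the ansatz construction and the modified GN inequality.
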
	

\begin{Rem}\label{Rem-ic}
It is shown in 
\cite{KNM2004} that any viscous rarefaction waves connecting same end states are time-asymptotically equivalent in the $ L^\infty(\R) $ space with the rate $ t^{-\frac{1}{2}}. $ We remark that the stability result \cref{rate} also holds true for more general initial values $ \ut^R(x_1,0) + v_0(x_1) + w_0(x) $ instead of \cref{ic}, where $ v_0(x_1) $ is any 1-d function in $ L^1(\R) \cap H^1(\R). $
\end{Rem}

\begin{Rem}
	Theorem \ref{Thm} is the first work concerning the m-d periodic perturbations around the nonlinear waves for conservation laws, which shows that the oscillations in all directions around the planar rarefaction wave decay to zero with the rate $t^{-\frac12},$ even though the initial perturbations keep oscillating at infinity $ \abs{x} \rightarrow +\infty$. In other words, the oscillations are eliminated due to the genuine nonlinearity of the equation.
\end{Rem}

	

Let us  outline the proof of \cref{Thm}. Motivated by  \cite{Xin1990} and \cite{KNM2004} in which the planar rarefaction waves are shown to be stable under m-d perturbations for the scalar equation \cref{eq}, we want to use the energy method to prove \cref{Thm}. 
However, the initial periodic perturbation $w_0(x)$ is not integrable on $\R^n,$ and has no any limit at far fields.  Thus, the effective  energy method developed in the previous articles can not be applied here directly.  
To overcome this difficulty, we construct a suitable ansatz $\ut(x,t)$ which contains the oscillations in the $x_1$ direction so that the oscillations in the difference between the solution $u$ and the ansatz $\ut$ is eliminated in the $x_1$ direction, i.e., $(u-\ut)(x,t) \in L^1_{x_1}(\R), $  This is the key point in our proof. Then  the stability with time-decay rates \cref{rate} can be obtained by the $L^p$ energy method developed in \cite{KNM2004}. 

To prove \cref{Thm}, we establish a Gagliardo-Nirenberg type inequality in the domain $\Omega:= \R\times \mathbb T^{n-1}$, which is the second novelty in this paper. The Gagliardo-Nirenberg (GN) inequality 
is very useful in the field of PDEs and usually holds in the whole space $\mathbb R^n$ or in the domain with zero Dirichlet boundary condition. Let us recall the GN inequality which reads as, for any $ 1\leq p\leq +\infty $ and integer $ 0\leq j < m $, 
\begin{equation}\label{gn}
\norm{\nabla_x^j u}_{L^p(\mathbb R^n)} \leq C \norm{\nabla_x^m u}_{L^r(\mathbb R^n)}^{\theta} \norm{u}_{L^q(\mathbb R^n)}^{1-\theta},
\end{equation}
where $ \frac{1}{p} = \frac{j}{n} + \left(\frac{1}{r} -\frac{m}{n} \right) \theta + \frac{1}{q} \left(1-\theta \right) $, $ \frac{j}{m} \leq \theta \leq 1$ and $C$ is a constant independent of $u$. However, the $ n $-d GN inequality \cref{gn} does not hold in the domain $\Omega$ generally owing to the following counterexample.

\vspace{0.1cm}

\textbf{Counterexample: }  
It is noted that any 1-d function $ 0\neq f(x_1) \in C_c^\infty(\R) $ is periodic in the $ x_i $ direction for $ i=2,\cdots,n $ (one cannot exclude the 1-d case since the initial perturbations satisfying \cref{ave-0} include the 1-d periodic functions). Then corresponding to the case $j=0, p=\frac{n}{n-1}, \theta=m=r=1$ in \cref{gn}, which is exactly the $ n $-d Sobolev inequality, one can let $ \zeta_d(x_1) = f(d^{-1} x_1), $ $ d>0, $ and a direct computation implies that
\begin{equation}\label{exam}
\norm{\zeta_d}_{L^{\frac{n}{n-1}}(\Omega)}=C_{n,d}\norm{\nabla_x \zeta_d}_{L^1(\Omega)},
\end{equation}
where $ C_{n,d}:= d^{\frac{n-1}{n}} \norm{f}_{L^{\frac{n}{n-1}}(\R)}/\norm{f'}_{L^1(\R)} $. Since $C_{n,d}\to +\infty$ as $ d \rightarrow +\infty$, we conclude that in general the $ n $-d GN inequality \eqref{gn} is not true in $\Omega$ without additional  conditions.

\vspace{0.2cm}

Instead, based on a function-decomposition in \cref{Lem-decompose} below, we establish a Gagliardo-Nirenberg type inequality on the domain $ \Omega$.

\begin{Thm}[GN type inequality on $ \Omega=\R\times\Torus^{n-1} $]\label{Thm-G-N}
	Let $ u \in L^q(\Omega) $ and $ \nabla^m u \in L^r(\Omega) $ where $ 1\leq q,r\leq +\infty $ and $ m\geq 1, $ and $ u $ is periodic in the $ x_i $ direction for $ i=2,\cdots,n. $ Then there exists a decomposition $ u(x) = \sum\limits_{k=0}^{n-1} u^{(k)}(x) $ such that each $ u^{(k)} $ satisfies the $ k+1 $-dimensional GN inequality \cref{gn}, i.e., 
	\begin{equation}\label{G-N-type-1}
	\norm{\nabla^j u^{(k)}}_{L^p(\Omega)} \leq C \norm{\nabla^{m} u}_{L^{r}(\Omega)}^{\theta_k} \norm{u}_{L^{q}(\Omega)}^{1-\theta_k},
	\end{equation}
	for any $ 0\leq j< m $ and $ 1\leq p \leq +\infty $ satisfying $ \frac{1}{p} = \frac{j}{k+1} + \left(\frac{1}{r}-\frac{m}{k+1}\right) \theta_k + \frac{1}{q}\left(1-\theta_k\right) $ and $ \frac{j}{m} \leq \theta_k \leq 1, $ 
	with the following exceptional cases,
	\begin{itemize}
		\item[1) ] if $ j=0, r m <k+1 $ and $ q = +\infty, $ additional assumption that $ u \to 0 $ as $ \abs{x_1} \to +\infty$ is required;
		
		\item[2) ] if $ 1<r<+\infty $ and $ m-j-\frac{k+1}{r} $ is a non-negative integer, additional assumption that $ \theta_k<1$ is required.
	\end{itemize}
	Hence, it holds that 
	\begin{equation}\label{G-N-type-2}
	\norm{\nabla^j u}_{L^p(\Omega)} \leq C \sum_{k=0}^{n-1}\norm{\nabla^{m} u}_{L^{r}(\Omega)}^{\theta_k} \norm{u}_{L^{q}(\Omega)}^{1-\theta_k},
	\end{equation}
	where the constant $ C>0 $ is independent of $ u. $
\end{Thm}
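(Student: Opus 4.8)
The plan is to prove the GN type inequality on $\Omega = \R\times\Torus^{n-1}$ by reducing it, dimension by dimension, to the classical Euclidean GN inequality \cref{gn} via a Fourier decomposition in the periodic variables, which is exactly the content I expect \cref{Lem-decompose} to provide. Write $x = (x_1, x')$ with $x' = (x_2,\dots,x_n) \in \Torus^{n-1}$ and expand $u$ in its Fourier series in $x'$, $u(x_1,x') = \sum_{\xi \in \mathbb Z^{n-1}} \hat u_\xi(x_1) e^{2\pi i \xi\cdot x'}$. The idea is to sort the frequencies $\xi$ by how many of their components vanish: group $\xi = 0$ gives a function of $x_1$ alone, frequencies supported on a single coordinate axis give essentially $1+1$-dimensional objects, and so on, with the generic frequency being genuinely $n$-dimensional. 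Summing the Fourier modes inside each group produces the functions $u^{(0)}, u^{(1)}, \dots, u^{(n-1)}$, where $u^{(k)}$ ``lives'' effectively in $k+1$ dimensions (one unbounded direction $x_1$ plus $k$ periodic directions that it actually depends on). Because the periodic directions in which $u^{(k)}$ oscillates can, up to a bounded multiplicative constant depending only on the period, be unfolded and compared with the same directions in $\R^{k}$, the classical GN inequality \cref{gn} in dimension $k+1$ applies to $u^{(k)}$ with the corresponding exponent relation $\frac1p = \frac{j}{k+1} + (\frac1r - \frac{m}{k+1})\theta_k + \frac1q(1-\theta_k)$, and one must check that the $L^p$, $L^q$, $L^r$ norms of $u^{(k)}$ over the slab $\Omega$ are comparable to the corresponding norms over $\R^{k+1}$ up to constants coming only from the fixed periods. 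Once each piece satisfies \cref{G-N-type-1}, the triangle inequality together with $\norm{\nabla^m u^{(k)}}_{L^r(\Omega)} \le C\norm{\nabla^m u}_{L^r(\Omega)}$ and $\norm{u^{(k)}}_{L^q(\Omega)} \le C\norm{u}_{L^q(\Omega)}$ (Fourier projections are bounded on these spaces with the relevant constants, or this is part of \cref{Lem-decompose}) yields \cref{G-N-type-2}.

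The exceptional cases have to be tracked carefully, since they are inherited verbatim from the classical GN inequality. Case 1), $j = 0$, $rm < k+1$, $q = +\infty$: here the critical Sobolev embedding $\dot W^{m,r} \hookrightarrow L^\infty$ in dimension $k+1$ fails on the full space unless one has decay, so the extra hypothesis $u \to 0$ as $\abs{x_1}\to+\infty$ must be propagated to each $u^{(k)}$; for $k \ge 1$ the periodicity already forces the mean over each $x'$-slice to control things, but the $x_1$-decay is what one genuinely needs, and it transfers to $u^{(k)}$ because the Fourier projection in $x'$ commutes with the $x_1$-limit. Case 2), $1 < r < +\infty$ with $m - j - \frac{k+1}{r}$ a non-negative integer: this is the well-known borderline of the Gagliardo--Nirenberg inequality where the endpoint $\theta_k = 1$ genuinely fails (the would-be inequality $\norm{\nabla^j u}_{L^p} \lesssim \norm{\nabla^m u}_{L^r}$ is false), so one simply restricts to $\theta_k < 1$ exactly as in the classical statement; no new phenomenon appears beyond bookkeeping the dimension $k+1$ in place of $n$.

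The main obstacle, and the step deserving the most care, is making precise the comparison of norms between a function on the slab $\Omega$ that oscillates in some of the periodic directions and a genuine function on $\R^{k+1}$, and verifying that \emph{all} the constants that enter are uniform in $u$ and depend only on $n$ and the fixed periods (here taken to be $1$). Concretely, for the group $u^{(k)}$ one wants to view the $k$ active periodic directions as a $k$-torus of unit size and invoke the GN inequality on $\R \times \Torus^k$; but that is again not Euclidean space, so one either iterates the argument or, more cleanly, observes that on a bounded periodic box the homogeneous Sobolev norm and the $L^q$ norm together control everything via a Poincaré--Wirtinger type argument once the zero mode in those directions has been split off — which is precisely why the decomposition is indexed by the number of genuinely active directions. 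The cleanest route, which I would follow, is: (i) first split off $u^{(0)}$, the $x'$-average, reducing to a $1$-d GN inequality in $x_1$; (ii) on the complement $u - u^{(0)}$, which has zero $x'$-mean, use Poincaré in $x'$ together with a covering/extension argument to compare with $\R\times\R^{n-1}$-type estimates; (iii) iterate this splitting to isolate the contribution of each number of active frequencies. The bookkeeping of which exponent triples $(p,q,r)$ and which $\theta_k$ are admissible in dimension $k+1$, and checking the two exceptional regimes survive the reduction, is where the real work lies; the rest is the triangle inequality and boundedness of Fourier projections.
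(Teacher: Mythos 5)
Your decomposition is, in substance, the same one the paper uses: grouping Fourier modes of $u$ in $x'=(x_2,\dots,x_n)$ according to the exact set of nonvanishing frequency components is precisely the iterated partial-averaging construction of \cref{Lem-decompose} (each group projection is an inclusion--exclusion of averages over sub-tori, which is why it is bounded on every $L^p$, $1\le p\le+\infty$ --- worth saying explicitly, since a general Fourier multiplier would not be). So the structural part of your argument matches the paper, and the final triangle-inequality step is fine.

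The genuine gap is the step you yourself flag as ``the main obstacle'': proving that each piece $u^{(k)}$, which lives on $\R\times\Torus^{k}$ and has zero mean in each of its $k$ active periodic variables, actually satisfies the $(k+1)$-dimensional GN inequality \cref{gn}. You propose to ``unfold'' the periodic directions and compare norms with $\R^{k+1}$ up to constants, or alternatively to use Poincar\'e plus a covering/extension argument, but none of these is carried out, and the unfolding claim as stated cannot be right without using the zero-mean structure: the paper's own counterexample \cref{exam} shows that the homogeneous (multiplicative) GN inequality fails on the slab in general, and a cutoff-and-extend argument produces lower-order derivative terms of $u^{(k)}$ that are not controlled by $\norm{\nabla^m u}_{L^r}$ and $\norm{u}_{L^q}$ alone, which is circular (controlling intermediate derivatives is part of what GN delivers). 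This missing step is exactly the content of the paper's \cref{Lem-inter}, whose proof is not a soft comparison but a rerun of Nirenberg's argument adapted to the torus directions: the pointwise product estimate \cref{ineq-2}, where the zero average in each $x_i$ is used to write $\abs{u}\le\int_{\Torus}\abs{\p_i u}\,dx_i$, and the one-dimensional derivative interpolation \cref{ineq-3} on $\Torus$, obtained by a covering of $[0,N]$ by intervals on which one of the two terms in Nirenberg's local inequality \cref{ineq-15} dominates, exploiting periodicity to choose the length scale. Until you supply a proof of this lemma (or an honest extension-operator argument that quantitatively exploits the zero means to remove the lower-order terms), the reduction from $u^{(k)}$ to the Euclidean inequality, and hence \cref{G-N-type-1}, is not established; the treatment of the two exceptional cases is then inherited correctly, but only once that core step is in place.
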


\begin{Rem}
	It is noted that the first term for $ k=0 $ on the right-hand side (RHS) of \cref{G-N-type-2} is necessary since $ u $ can be a 1-d function defined on $ \R. $ 
	For example, considering the same function $\zeta_d(x_1)$ as in the counterexample, when corresponding to the case $j=0, p = r=2 $ and $ m=q =1$ in \eqref{G-N-type-2}, a direct computation gives that
	\begin{equation}\label{couter}
	\norm{\zeta_d}_{L^2(\Omega)} = C d^{ \frac{1}{2} \left(3\theta-1\right) }
	\norm{\nabla\zeta_d}^{\theta}_{L^2(\Omega)}\norm{\zeta_d}^{1-\theta}_{L^1(\Omega)} \quad \text{for any } 0\leq \theta \leq 1,
	\end{equation}
	where $ C>0 $ is independent of $ d. $ The estimate \cref{G-N-type-2} can be true only for $\theta=\theta_0=\frac13 $. Otherwise, one can let $ d\to 0+ $ if $ \theta<\frac13 $ and $ d\to +\infty $ if $ \theta>\frac13, $ respectively to get the contradiction. 
\end{Rem}

\begin{Rem}
	An interesting interpolation inequality involving $\nabla^2 u$  in the domain $\mathbb R\times \mathbb T^2$ was established  in \cite{WW2019}.
\end{Rem}

The rest of the paper is organized as follows. Some preliminaries 
on the viscous rarefaction waves and the construction of the ansatz are given in \cref{pre}. 
\cref{Sec-inter} is devoted to the proof of \cref{Thm-G-N} and an interpolation inequality, \cref{Cor-inter}. 
In \cref{Sec-Apriori}, we show the desired a priori estimates, \cref{Thm-apriori}; thereafter, the proof of \cref{Thm} is completed.

\section{Preliminaries and Ansatz}\label{pre}

Some properties of the viscous rarefaction wave are listed as follows. 

\begin{Lem}\label{Lem-ut-R}
	The viscous rarefaction wave $ \ut^R(x_1,t) $ solving \cref{approx} satisfies
	\begin{equation}\label{lim-equiv}
	\lim\limits_{t\rightarrow +\infty} \norm{\left(\ut^R - u^R\right)(\cdot,t)}_{L^\infty(\R)} = 0.
	\end{equation}
	Moreover, for any $ t>0 $ it holds that
	\begin{align}
	& 0 < \p_1 \ut^R(x_1,t) \leq \min \left\{ \frac{C}{t},~ \max_{x_1\in\R} \left(u_0^R\right)' \right\} \qquad \forall x_1 \in \R, \label{entropy} \\
	& \norm{ \left(\ut^R(\cdot,t) - \ub_l \right) \left(\ut^R(\cdot,t) - \ub_r \right) }_{L^1(\R)} \leq C(1+t), \label{ut-1} \\
	& \norm{\p_1 \ut^R(\cdot,t)}_{L^p(\R)} \leq Ct^{-1+\frac{1}{p}}, \qquad p \in [1,+\infty), \label{ut-2}
	\end{align}
	where the constant $ C>0 $ is independent of $ t. $
\end{Lem}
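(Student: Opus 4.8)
The plan is to exploit the fact that \cref{approx} is a one-dimensional viscous conservation law with smooth, strictly increasing initial datum, and then transfer the known large-time description of viscous rarefaction waves to our setting. For the asymptotic equivalence \cref{lim-equiv}, I would use the Hopf--Cole-type / characteristic analysis of \cite{MN1986,Xin1990}: since $\ut^R$ solves the scalar viscous equation with monotone data interpolating between $\ub_l$ and $\ub_r$, it converges uniformly as $t\to+\infty$ to the inviscid centered rarefaction wave $u^R$ built from the same end states; this is precisely the content quoted from \cite{KNM2004} in \cref{Rem-ic}. Alternatively one can invoke the explicit solution formula available when $f_1$ is replaced by its quadratic part and a standard perturbation argument for the genuinely nonlinear flux $f_1''\ge a_0>0$.

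For the pointwise bounds \cref{entropy}, the key observation is that $v:=\p_1\ut^R$ solves the linear (in $v$) parabolic equation obtained by differentiating \cref{approx} in $x_1$, namely $\p_t v + \p_1\!\big(f_1'(\ut^R)\,v\big) = \p_1^2 v$, with strictly positive initial datum $v(x_1,0)=(u_0^R)'(x_1)>0$. The maximum principle then gives both $v>0$ for all $t>0$ and the upper bound $v(\cdot,t)\le \max_{x_1}(u_0^R)'$ for all $t$. The decay bound $v\le C/t$ is the more delicate half: here I would follow \cite{MN1986,Xin1990,KNM2004}, comparing $\ut^R$ with the solution of the corresponding problem for the Burgers-type flux (using $f_1''\ge a_0$), for which the rarefaction fan spreads linearly in $t$ so that the slope is $O(1/t)$; the genuine nonlinearity $f_1''\ge a_0$ is exactly what forces this spreading. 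Combining the two bounds yields \cref{entropy}.

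The remaining two estimates follow from \cref{entropy} together with the geometry of the fan. For \cref{ut-2}, write $\norm{\p_1\ut^R(\cdot,t)}_{L^p}^p = \int (\p_1\ut^R)^{p-1}\,\p_1\ut^R\,dx_1 \le (C/t)^{p-1}\int \p_1\ut^R\,dx_1 = (C/t)^{p-1}(\ub_r-\ub_l)$, using \cref{entropy} to bound $(\p_1\ut^R)^{p-1}$ and the monotonicity to evaluate the remaining integral exactly; taking $p$-th roots gives $Ct^{-1+1/p}$. For \cref{ut-1}, note that $(\ut^R-\ub_l)(\ut^R-\ub_r)\le 0$ since $\ub_l\le \ut^R\le \ub_r$, and that this quantity is supported, up to exponentially small tails from the smoothing, essentially on the region where $\ut^R$ transitions, whose $x_1$-width is $O(1+t)$ by the linear spreading of the fan; bounding the integrand by $\tfrac14(\ub_r-\ub_l)^2$ on that region gives the $C(1+t)$ bound. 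One must be slightly careful with the exponential tails coming from the $\tanh$ initial profile, but these contribute only an $O(1)$ term, absorbed into the constant.

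The main obstacle is the sharp decay rate $\p_1\ut^R\le C/t$ in \cref{entropy}: unlike positivity and the static upper bound, which are immediate from the maximum principle, the $1/t$ decay genuinely uses the convexity $f_1''\ge a_0$ and requires a comparison argument (or the Hopf--Cole formula in the Burgers case) to show the rarefaction fan opens at a linear rate. Everything else is either a direct maximum-principle argument or an elementary consequence of \cref{entropy} plus the monotonicity of $\ut^R$ in $x_1$, so I would cite \cite{MN1986,Xin1990,KNM2004} for the fan-spreading estimate and spend the bulk of the written proof on deriving \cref{ut-1,ut-2} from \cref{entropy}.
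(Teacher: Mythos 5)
Most of your outline coincides with the paper's actual proof: the equivalence \cref{lim-equiv} is quoted from the literature (the paper cites \cite{IO1960,KNM2004}); the positivity and the static upper bound in \cref{entropy} come from the maximum principle applied to the equation for $v=\p_1\ut^R$ (yours is the same equation written in conservation form); the $C/t$ bound is the Ole\v{\i}nik one-sided estimate for convex fluxes, which the paper also simply cites (\cite{Oleinik1957}; your comparison idea is made precise by noting that $\bar v(t)=\tfrac{1}{a_0 t}$ is a supersolution of $\p_t v+f_1''(\ut^R)v^2+f_1'(\ut^R)\p_1 v=\p_1^2 v$); and your derivation of \cref{ut-2} is verbatim the paper's.

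The genuine gap is in your proof of \cref{ut-1}. It rests on two unproven pointwise claims: that the transition region has width $O(1+t)$ in a quantitative sense, and that outside it the product $(\ut^R-\ub_l)(\ub_r-\ut^R)$ has ``exponentially small tails'' contributing only $O(1)$ to the integral. Nothing you have established gives any spatial decay of $\ut^R-\ub_l$ as $x_1\to-\infty$ (or of $\ub_r-\ut^R$ as $x_1\to+\infty$) for $t>0$: the maximum principle only yields $\ub_l<\ut^R<\ub_r$, and the $L^\infty$ convergence \cref{lim-equiv} gives smallness outside the fan but not integrability over the infinite tails. Supplying such uniform-in-time, integrable far-field decay would require additional work (comparison with traveling exponential supersolutions or heat-kernel estimates), and the $O(1)$ claim is actually doubtful, since the diffusive layers at the fan edges typically contribute $O(\sqrt{t})$ --- a weaker bound that would still suffice but must itself be proved. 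The paper avoids all of this with a softer flux argument: differentiating $\int_{-\infty}^0\left(\ut^R-\ub_l\right)dx_1+\int_0^{+\infty}\left(\ub_r-\ut^R\right)dx_1$ in time and using \cref{approx}, the derivative reduces to boundary flux terms at $x_1=0$ and at infinity, which are uniformly bounded because $\ut^R$ and $\p_1\ut^R$ are; hence this functional grows at most linearly, and the elementary bound $\abs{(\ut^R-\ub_l)(\ut^R-\ub_r)}\le(\ub_r-\ub_l)\min\left\{\ut^R-\ub_l,\ \ub_r-\ut^R\right\}$ then yields \cref{ut-1} directly. You should either adopt this argument or add the missing far-field decay estimates.
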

\begin{proof}
	The $ L^\infty $ time-asymptotically equivalence \cref{lim-equiv} has been proved in \cite{IO1960,KNM2004}.
	By the maximum principles for \cref{approx} and the equation of the derivative $ v = \p_1 \ut^R, $
	\begin{equation}
	\p_t v -\p_1^2 v + f_1''(\ut^R) v^2 + f_1'(\ut^R) \p_1 v =0,
	\end{equation}
	one can get that $ \sup\limits_{x_1\in\R,t\geq 0}\abs{\ut^R(x_1,t)} \leq \norm{u_0^R}_{L^\infty(\R)} $ and $ 0< \p_1 \ut^R(x_1,t) \leq \max\limits_{x_1\in\R} \left(u_0^R\right)' (x_1). $
	Besides, it was shown in \cite{Oleinik1957} that the solution $ \ut^R $ of the 1-d convex conservation law \cref{approx} satisfies the well-known Ole\v{\i}nik entropy condition, i.e., $ \p_1 \ut^R(x_1,t) \leq \frac{C}{t} $ for all $ x_1\in\R, $ where $ C>0 $ is independent of $ t. $ 
	
	Since $ \ub_l < \ut^R < \ub_r $ and both $ \ut^R $ and $ \p_1 \ut^R $ are uniformly bounded, it follows from \cref{approx} that
	\begin{equation*}
	\frac{d}{dt} \left[\int_{-\infty}^0 \left(\ut^R(x_1,t)-\ub_l\right) dx_1 + \int_0^{+\infty} \left(\ub_r-\ut^R(x_1,t)\right) dx_1 \right] \leq C, \quad t>0,
	\end{equation*}
	which implies that
	\begin{equation*}
	\int_{-\infty}^0 \left(\ut^R(x_1,t)-\ub_l\right) dx_1 + \int_0^{+\infty} \left(\ub_r-\ut^R(x_1,t)\right) dx_1 \leq C(1+t), \quad t>0,
	\end{equation*}
	and thus \cref{ut-1} holds true. And \cref{ut-2} can follow from
	\begin{equation*}
	\norm{\p_1 \ut^R}_{L^p(\R)}^p \leq \norm{\p_1 \ut^R}_{L^\infty(\R)}^{p-1} \int_\R \p_1 \ut^R(x_1,t) dx_1 \leq Ct^{-p+1}.
	\end{equation*}
\end{proof}

Now we construct the ansatz. Set 
\begin{equation}\label{g}
g(x_1,t) := \frac{\ut^R(x_1,t)-\ub_l}{\ub_r-\ub_l}, \quad x_1\in\R, ~t\geq 0,
\end{equation}
which is smooth, belongs to the interval $(0,1) $ and satisfies $ \p_1 g(x_1,t) >0 $ for any $ x_1\in\R, t\geq 0. $

Let $ u_l(x,t) $ and $ u_r(x,t) $ denote the two periodic solutions of \eqref{eq} (see \cite{Serre2002} for the global existence) with the respective periodic data
\begin{equation}\label{ic-per}
u_l(x,0) = \ub_l + w_0(x) \quad \text{ and} \quad u_r(x,0) = \ub_r + w_0(x), \quad x\in\R^n.
\end{equation} 
And define
\begin{equation}\label{phi-l-r}
w_i(x,t) := u_i(x,t) - \ub_i, \quad i = l \text{ or } r,
\end{equation}
which is a periodic function with zero average for any $ t\geq 0. $ Then we have 
\begin{Lem}\label{Lem-per}
	If $ w_0(x) \in H^{\left[\frac{n}{2}\right] + 2}(\Torus^n) $ satisfies \cref{ave-0}, then for $ i = l $ or $ r, $ it holds that
	\begin{equation}
	\norm{w_i(\cdot,t)}_{W^{1,\infty}(\R^n)}
	\leq C \norm{w_0}_{ H^{\left[\frac{n}{2}\right] + 2}(\Torus^n) } e^{- 2 \alpha t}, \quad t\geq 0, 
	\end{equation}
	where the constants $ \alpha >0 $ and $ C>0 $ are independent of $ t. $
\end{Lem}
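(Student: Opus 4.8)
The strategy is to exploit the fact that $w_i(x,t) = u_i(x,t) - \ub_i$ solves a viscous conservation law whose leading-order linearization around the constant state $\ub_i$ is the heat equation on the torus $\Torus^n$, for which the zero-average condition forces exponential decay. Writing out the equation for $w_i$ from \cref{eq} and \cref{phi-l-r}, one gets
\begin{equation*}
\p_t w_i + \sum_{j=1}^n \p_j\bigl(f_j(\ub_i + w_i) - f_j(\ub_i)\bigr) = \lap w_i, \qquad w_i(x,0) = w_0(x),
\end{equation*}
which is a quasilinear parabolic equation on $\Torus^n$ with periodic initial data of zero mean; note the nonlinear flux term can be written as $\p_j\bigl(b_{i,j}(w_i)\,w_i\bigr)$ for smooth bounded coefficients $b_{i,j}$, so it is in divergence form and integrates to zero over $\Torus^n$, preserving $\int_{\Torus^n} w_i(\cdot,t)\,dx = 0$ for all $t\ge 0$.

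First I would establish a local existence and smallness bootstrap in $H^{[n/2]+2}(\Torus^n)$: since $w_0$ is in this space and the equation is parabolic, standard energy estimates give short-time existence, and I would show that the $H^{[n/2]+2}$ norm stays controlled. Second, and this is the heart of the matter, I would run a weighted energy estimate on $\Torus^n$. Multiplying the equation by $w_i$ and integrating by parts, the dissipation term gives $-\norm{\nabla w_i}_{L^2(\Torus^n)}^2$; the Poincaré inequality on $\Torus^n$ (valid precisely because of the zero-average condition \cref{ave-0}, which is preserved in time) yields $\norm{\nabla w_i}_{L^2}^2 \ge c\,\norm{w_i}_{L^2}^2$ for some $c>0$ depending only on $n$; the nonlinear term, after integration by parts, is cubic-type and is absorbed by the dissipation once $\norm{w_i}_{W^{1,\infty}}$ is small. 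This produces $\frac{d}{dt}\norm{w_i}_{L^2}^2 \le -4\alpha\,\norm{w_i}_{L^2}^2$ for a suitable $\alpha>0$, hence $\norm{w_i(\cdot,t)}_{L^2(\Torus^n)} \le \norm{w_0}_{L^2} e^{-2\alpha t}$. Iterating the same argument on the equations satisfied by $\p^\beta w_i$ for $|\beta| \le [n/2]+2$ (each such derivative again has zero average on $\Torus^n$ and satisfies a parabolic equation with the same linear part) upgrades this to $\norm{w_i(\cdot,t)}_{H^{[n/2]+2}(\Torus^n)} \le C\,\norm{w_0}_{H^{[n/2]+2}(\Torus^n)}\, e^{-2\alpha t}$.

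Finally, I would pass from the $H^{[n/2]+2}(\Torus^n)$ bound to the claimed $W^{1,\infty}(\R^n)$ bound. Since $w_i$ is periodic, $\norm{w_i(\cdot,t)}_{W^{1,\infty}(\R^n)} = \norm{w_i(\cdot,t)}_{W^{1,\infty}(\Torus^n)}$, and by the Sobolev embedding $H^{[n/2]+2}(\Torus^n) \hookrightarrow W^{1,\infty}(\Torus^n)$ (which holds because $[n/2]+2 > n/2 + 1$), the desired estimate follows with the same rate $e^{-2\alpha t}$. The main obstacle is the nonlinear energy estimate: one must close the bootstrap by showing that the a priori smallness of $\norm{w_i}_{W^{1,\infty}}$ — itself obtained from the decaying $H^{[n/2]+2}$ norm via Sobolev embedding — is consistent, i.e., the constants do not blow up; this is a standard continuity argument but requires care in tracking how the nonlinear terms depend on the solution norm, and in choosing $\alpha$ strictly smaller than the spectral gap of the Laplacian on $\Torus^n$ so that the cubic remainder is genuinely absorbed.
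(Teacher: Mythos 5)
Your overall skeleton (write the equation for $w_i$ on $\Torus^n$, note that the zero average is preserved, run $L^2$ and higher-order energy estimates with the Poincar\'e inequality to get exponential decay, then conclude via the embedding $H^{[\frac n2]+2}(\Torus^n)\hookrightarrow W^{1,\infty}(\Torus^n)$) is exactly the "basic $L^2$ energy estimates and Poincar\'e inequality" argument the paper invokes. However, your plan hinges on a smallness bootstrap ("the nonlinear term \dots is absorbed by the dissipation once $\norm{w_i}_{W^{1,\infty}}$ is small", and the closing continuity argument), and this is a genuine gap: neither the lemma nor Theorem \ref{Thm} assumes $w_0$ small, so a proof conditional on smallness of the perturbation establishes strictly less than the statement, and there is no small parameter available to start the bootstrap.

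The point you are missing is that for a \emph{scalar} equation the smallness is not needed. At the $L^2$ level the nonlinear flux term vanishes identically rather than being "cubic and absorbed": writing $g_j(w):=f_j(\ub_i+w)-f_j(\ub_i)$ and $G_j(w):=\int_0^w g_j(s)\,ds$, one has
\begin{equation*}
\int_{\Torus^n} w_i\,\p_j\bigl(g_j(w_i)\bigr)\,dx=-\int_{\Torus^n} g_j(w_i)\,\p_j w_i\,dx=-\int_{\Torus^n}\p_j\bigl(G_j(w_i)\bigr)\,dx=0,
\end{equation*}
so $\frac{d}{dt}\norm{w_i}_{L^2(\Torus^n)}^2\le -2\norm{\nabla w_i}_{L^2(\Torus^n)}^2\le -c\norm{w_i}_{L^2(\Torus^n)}^2$ with no condition on the size of $w_i$. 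For the higher-order estimates (derivatives up to order $[\frac n2]+2$), the nonlinear commutator terms do not vanish, but they are controlled without smallness by combining the uniform bound $\norm{w_i(\cdot,t)}_{L^\infty}\le\norm{w_0}_{L^\infty}+|\ub_i|+C$ coming from the maximum principle for the scalar equation (this is also how the paper gets global existence, citing Serre), interpolation, and the exponential decay already established for the lower-order norms; also note that $\p^\beta w_i$ keeps zero average so Poincar\'e applies at every order. With this replacement your argument goes through and yields $\norm{w_i(\cdot,t)}_{H^{[\frac n2]+2}(\Torus^n)}\le C\norm{w_0}_{H^{[\frac n2]+2}(\Torus^n)}e^{-2\alpha t}$ for all data, and the final Sobolev-embedding step is correct as you wrote it.
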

\begin{proof}
	The proof of \cref{Lem-per} is based on basic $ L^2 $ energy  estimates and the Poinc\'{a}re inequality on $ \Torus^n, $ which is standard and thus omitted.
\end{proof}

We are ready to  construct the ansatz as follows:
\begin{equation}\label{ansatz}
\ut(x,t) := u_l(x,t) \left(1-g(x_1,t)\right) + u_r(x,t) g(x_1,t),
\end{equation}
which is periodic in the $ x_i $ direction for $ i=2,\cdots,n. $
By direct calculations, the source term induced by the ansatz $ \ut(x,t) $ is given as follows.
\begin{align}
h(x,t) :=~ & \p_t \ut + \sum\limits_{i=1}^{n}\p_i \left(f_i(\ut(x,t))\right) - \lap \ut(x,t) \notag \\
=~ & \left(u_r-u_l\right) g(1-g) \sum\limits_{i=1}^n \left[ \sigma_i(u_l,\ut) \p_i w_l - \sigma_i(u_r,\ut) \p_i w_r \right] \label{h} \\
& + \left(u_r-u_l\right) \sigma_1(\ut^R,\ut) \left(\ut-\ut^R\right) \p_1 g - 2\p_1\left(w_r-w_l\right) \p_1 g, \notag
\end{align}
where $ \sigma_i(u,v) := \int_0^1 f_i''(v+\theta(u-v)) d\theta, ~i=1,\cdots,n. $ Thus, the source term $ h(x,t) $ is also periodic in the $ x_i $ direction for $ i=2,\cdots,n$. 

\begin{Lem}\label{Lem-h}
	Under the assumptions of \cref{Lem-per}, it holds that for any $ p\in[1,+\infty], $
	\begin{equation}\label{est-h}
	\norm{h(\cdot,t)}_{L^p(\Omega)} \leq C \norm{w_0}_{ H^{\left[\frac{n}{2}\right] + 2}(\Torus^n) } e^{-\alpha t}, \quad t\geq 0, 
	\end{equation}
	where $ \alpha >0 $ is the constant in \cref{Lem-per}, and $ C>0 $ is independent of $ t. $
\end{Lem}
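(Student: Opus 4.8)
The plan is to split the explicit source term \cref{h} into its three summands, $h = h_1 + h_2 + h_3$, where $h_1 := (u_r-u_l)\,g(1-g)\sum_{i=1}^n[\sigma_i(u_l,\ut)\p_i w_l-\sigma_i(u_r,\ut)\p_i w_r]$, $h_2 := (u_r-u_l)\,\sigma_1(\ut^R,\ut)\,(\ut-\ut^R)\,\p_1 g$, and $h_3 := -2\,\p_1(w_r-w_l)\,\p_1 g$, and to estimate each piece in $L^p(\Omega)$ by H\"older's inequality. Throughout, the constants $C$ are allowed to depend on $\norm{w_0}_{H^{[\frac{n}{2}]+2}(\Torus^n)}$; since this dependence enters only through \cref{Lem-per} and is linear, retaining it gives the precise form of \cref{est-h}. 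The preliminary observation is that every amplitude factor in \cref{h} is uniformly bounded: by \cref{Lem-per}, $\norm{w_i(\cdot,t)}_{W^{1,\infty}(\R^n)}\le C$ for $t\ge 0$ ($i=l,r$), so $u_l=\ub_l+w_l$, $u_r=\ub_r+w_r$, their convex combination $\ut$, and $\ut^R\in(\ub_l,\ub_r)$ all take values in one fixed bounded set; hence $\abs{u_r-u_l}\le C$, and since each $f_i''$ is continuous, $\abs{\sigma_i(\cdot,\cdot)}\le C$; also $0\le g,1-g\le 1$ by \cref{g}.

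Next I would record two facts about the one-dimensional profiles that carry the time dependence, noting first that since $g=g(x_1)$, one has $\norm{F(x_1)}_{L^p(\Omega)}=\norm{F}_{L^p(\R)}$ for any profile $F$ depending on $x_1$ alone, because $\abs{\Torus^{n-1}}=1$. First, $g(1-g)=(\ut^R-\ub_l)(\ub_r-\ut^R)/(\ub_r-\ub_l)^2\le \tfrac14$, while \cref{ut-1} gives $\norm{g(1-g)(\cdot,t)}_{L^1(\R)}\le C(1+t)$; interpolating yields $\norm{g(1-g)(\cdot,t)}_{L^p(\R)}\le C(1+t)^{1/p}$ for all $p\in[1,+\infty]$. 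Second, $\p_1 g=\p_1\ut^R/(\ub_r-\ub_l)$, so by \cref{entropy}, $\norm{\p_1 g(\cdot,t)}_{L^\infty(\R)}\le C$ while $\norm{\p_1 g(\cdot,t)}_{L^1(\R)}=1$; hence $\norm{\p_1 g(\cdot,t)}_{L^p(\R)}\le C$ for all $p\in[1,+\infty]$ and $t\ge 0$.

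With these in hand the estimates are immediate. For $h_1$: bound the amplitude factors and the $\sigma_i$ by $C$, factor out $\norm{\p_i w_l(\cdot,t)}_{L^\infty(\R^n)}+\norm{\p_i w_r(\cdot,t)}_{L^\infty(\R^n)}\le C e^{-2\alpha t}$ from \cref{Lem-per}, and keep $g(1-g)$ in $L^p(\Omega)=L^p(\R)$, giving $\norm{h_1(\cdot,t)}_{L^p(\Omega)}\le C(1+t)^{1/p}e^{-2\alpha t}$. For $h_2$: the identity $\ut-\ut^R=w_l(1-g)+w_r g$ (which follows from $\ut^R=\ub_l+(\ub_r-\ub_l)g$) yields $\norm{(\ut-\ut^R)(\cdot,t)}_{L^\infty(\Omega)}\le C e^{-2\alpha t}$, and with $\norm{\p_1 g(\cdot,t)}_{L^p(\Omega)}\le C$ this gives $\norm{h_2(\cdot,t)}_{L^p(\Omega)}\le C e^{-2\alpha t}$. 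For $h_3$: $\norm{\p_1(w_r-w_l)(\cdot,t)}_{L^\infty(\Omega)}\le C e^{-2\alpha t}$ and $\norm{\p_1 g(\cdot,t)}_{L^p(\Omega)}\le C$ give $\norm{h_3(\cdot,t)}_{L^p(\Omega)}\le C e^{-2\alpha t}$. Summing the three bounds and using $(1+t)^{1/p}e^{-\alpha t}\le C$ for $t\ge 0$ to absorb the polynomial factor into a single copy of $e^{-\alpha t}$ produces \cref{est-h}.

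The only mildly delicate point, and hence the main obstacle, is the non-uniform-in-time growth $\norm{g(1-g)(\cdot,t)}_{L^p(\R)}\sim(1+t)^{1/p}$ in the $h_1$ term: one must keep the sharp rate $e^{-2\alpha t}$ (not merely $e^{-\alpha t}$) furnished by \cref{Lem-per}, since it is precisely the spare factor $e^{-\alpha t}$ that dominates this polynomial growth. Everything else is a routine application of H\"older's inequality together with the pointwise and integral bounds already established in \cref{Lem-ut-R} and \cref{Lem-per}; in particular the Gagliardo--Nirenberg inequality of \cref{Thm-G-N} is not needed here.
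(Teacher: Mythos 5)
Your proposal is correct and in substance the same as the paper's argument: the paper verifies \cref{est-h} at the endpoints $p=1$ and $p=+\infty$ using \cref{ut-1}, \cref{ut-2} and \cref{Lem-per} and then interpolates $h$ itself, which is exactly your mechanism of pairing the $(1+t)$-growth of the profile factors with the spare $e^{-\alpha t}$ from the $e^{-2\alpha t}$ decay, only with the interpolation applied to $h$ rather than to $g(1-g)$. Your write-up simply fills in the term-by-term details (including the useful identity $\ut-\ut^R=w_l(1-g)+w_rg$) that the paper labels straightforward.
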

\begin{proof}
	By \cref{ut-1}, \cref{ut-2} and \cref{Lem-per}, it is straightforward to check that \cref{est-h} holds for $ p=1 $ and $ +\infty $. And for $ p\in (1,+\infty), $  \cref{est-h} follows from the interpolation $ \norm{h}_{L^p} \leq \norm{h}_{L^\infty}^{1-\frac{1}{p}} \norm{h}_{L^1}^{\frac{1}{p}}. $ 
\end{proof}

\section{Interpolation inequality on $\Omega = \mathbb R\times \mathbb T^{n-1}$}\label{Sec-inter}

Although the $ n $-d GN inequality \eqref{gn} does not hold in the domain $\Omega=\R\times\Torus^{n-1}$ generally, it is true with an additional condition, that is, 
\begin{Lem}\label{Lem-inter}
	Let $ u \in L^q(\Omega), $ and its derivatives of order $ m, $ $ \nabla^m u \in L^r(\Omega), $ where $ m \geq 1 $ and $ 1\leq r,q \leq +\infty. $ And assume that $ u $ is periodic in the $ x_i $ direction and satisfies 
	\begin{equation}\label{ave-0-i}
	\int_{\Torus} u(x) dx_i = 0 \quad \text{ for all } i=2,3,\cdots,n.
	\end{equation}
	Then the $ n $-d GN inequality \eqref{gn} holds true for $u$.
\end{Lem}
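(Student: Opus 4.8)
The plan is to re-run the classical proof of the Gagliardo--Nirenberg inequality on $\R^n$, using that its only two uses of Euclidean structure survive on $\Omega=\R\times\Torus^{n-1}$ for the functions under consideration. First, every integration by parts in that proof has no boundary term, and this persists on $\Omega$: periodicity disposes of the boundary contributions in the directions $x_2,\dots,x_n$, and those in $x_1$ vanish after the reduction below. Second, the classical proof repeatedly invokes the one-dimensional bound $\norm{v}_{L^\infty_{x_i}}\le\norm{\p_i v}_{L^1_{x_i}}$, which comes from the Fundamental Theorem of Calculus together with the decay of $v$ at infinity; on $\Omega$ this is available in the variable $x_1$ (again after the reduction), while in each periodic direction $x_i$ it is replaced by the Poincar\'e--Wirtinger inequality on lines parallel to the $x_i$-axis --- and this is exactly where the hypothesis \eqref{ave-0-i} enters.

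First I would reduce to a smooth function that is compactly supported in $x_1$ and still periodic with zero average in $x_2,\dots,x_n$: mollification and multiplication by a cutoff $\chi(x_1/R)$ preserve all these properties --- the zero-average condition because $\chi(x_1/R)$ depends only on $x_1$ and so does not affect $\int_{\Torus}(\,\cdot\,)\,dx_i$ --- and the lower-order commutator terms created by the cutoff are removed by a standard truncation argument (equivalently, by an induction on $m$ using the inequalities of lower order). I would also record the elementary fact that if $u$ satisfies \eqref{ave-0-i}, then so does every partial derivative $\p^\alpha u$: arguing by induction on $\abs\alpha$, one has $\int_{\Torus}\p_i(\p^\beta u)\,dx_i=0$ by periodicity, while $\int_{\Torus}\p_k(\p^\beta u)\,dx_i=\p_k\int_{\Torus}\p^\beta u\,dx_i=0$ for $k\ne i$. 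Hence the Poincar\'e--Wirtinger substitute above is available not only for $u$ but for all of its derivatives, which is what lets any induction on the order of differentiation go through.

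The model case is the borderline Sobolev inequality $\norm{v}_{L^{n/(n-1)}(\Omega)}\le C\norm{\nabla v}_{L^1(\Omega)}$ for $v$ in the class just described: one bounds $\abs{v(x)}^{n/(n-1)}\le\prod_{i=1}^{n}\bigl(\int_{I_i}\abs{\p_i v}\,dx_i\bigr)^{1/(n-1)}$ with $I_1=\R$ --- the one place where the Fundamental Theorem of Calculus and the compact support are used --- and $I_i=\Torus$ for $i\ge 2$, and then integrates over $\Omega$ one variable at a time using the generalized H\"older inequality at each stage, exactly as in the Euclidean case. The full range of exponents in \eqref{gn} is obtained by the same scheme, now with the complete one-dimensional Gagliardo--Nirenberg inequalities as inputs --- the classical one on $\R$ for the variable $x_1$, and its counterpart for mean-zero functions on the circle $\Torus$ for each of $x_2,\dots,x_n$ (the latter proved by the same elementary interpolation arguments, the Fundamental Theorem of Calculus remaining applicable on $\Torus$ because a mean-zero function vanishes somewhere) --- combined by H\"older's and Minkowski's integral inequalities in mixed-norm spaces, just as one passes from the one-dimensional to the $n$-dimensional Gagliardo--Nirenberg inequality; the mean-zero conditions needed at each slicing are supplied by the observation of the previous paragraph, since they are always consumed for derivatives of $u$.

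The main obstacle is that one may not use the usual shortcut in which the borderline Sobolev inequality is applied to $\abs{u}^{\gamma}$ to reach the $L^p$-Sobolev inequalities in one stroke: the function $\abs{u}^{\gamma}$ is no longer mean-zero in the periodic variables, and the borderline inequality genuinely fails for it --- this is precisely the counterexample $\zeta_d$ recorded in \eqref{exam}. Forbidding this shortcut is what forces the more laborious route through the one-dimensional inequalities and mixed-norm interpolation above, and the only truly delicate point there is the bookkeeping: following, through the iterated slicing, which mean-zero condition is used where, and checking that in every periodic direction it is the Poincar\'e--Wirtinger inequality --- never the Fundamental Theorem of Calculus --- that is being invoked.
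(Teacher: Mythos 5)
Your overall strategy (slice the domain, use the Fundamental Theorem of Calculus in $x_1$ and the zero-mean condition in the periodic directions, and feed this into Nirenberg's scheme) is the same as the paper's, but the proposal has a genuine gap, and it is created by your own ``main obstacle'' paragraph, which is incorrect. The borderline pointwise bound $\abs{v(x)}\leq\int_{\Torus}\abs{\p_i v}\,dx_i$ on a line parallel to the $x_i$-axis does not require $v$ to have zero mean there; it only requires $v$ to vanish at some point of that line. If $u$ satisfies \eqref{ave-0-i}, then on (almost) every such line $u$ has zero mean and hence vanishes somewhere, and therefore so does $\abs{u}^{\gamma}$; consequently the $L^{n/(n-1)}$--$L^1$ inequality obtained from \eqref{ineq-2} applies to $\abs{u}^{\gamma}$ exactly as on $\R^n$. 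The counterexample $\zeta_d$ of \eqref{exam} is irrelevant here, since $\zeta_d$ has nonzero mean on the periodic circles and is not a power of a function satisfying \eqref{ave-0-i}. Having (wrongly) forbidden yourself this power trick, you propose to reach ``the full range of exponents'' by combining one-dimensional Gagliardo--Nirenberg inequalities in each variable with H\"older and Minkowski in mixed norms; but that machinery produces no gain of integrability beyond the sliced one-dimensional estimates (essentially only the cases $\theta=j/m$), and in particular cannot yield the Sobolev-type cases such as $\norm{u}_{L^{nr/(n-r)}(\Omega)}\leq C\norm{\nabla u}_{L^r(\Omega)}$ for $1<r<n$, whose elementary proof in Nirenberg's reduction is precisely the application of the borderline inequality to $\abs{u}^{\gamma}$. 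So, as written, your route proves only part of \eqref{gn}; restoring the power trick (which, as explained, is legitimate under \eqref{ave-0-i}) repairs the argument and brings you back to the paper's proof, which verifies the two extreme cases \eqref{ineq-2} and \eqref{ineq-3} and then invokes Nirenberg's reduction.

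A secondary under-justified step is your treatment of the one-dimensional interpolation inequality in the periodic variables. The multiplicative estimate $\norm{\p_i u}_{L^p(\Torus)}\leq C\norm{\p_i^2 u}_{L^r(\Torus)}^{1/2}\norm{u}_{L^q(\Torus)}^{1/2}$ does not follow from ``a mean-zero function vanishes somewhere'' plus the Fundamental Theorem of Calculus: on an interval of fixed length Nirenberg's inequality gives only the additive form with length-dependent constants, and on the circle there is no scaling with which to upgrade the additive form to the product form. The paper handles exactly this point by unwrapping the circle, via periodicity, onto long intervals $[0,N]$ and running Nirenberg's covering argument (and, notably, no mean-zero condition is needed for that step, since $\p_i u$ automatically has zero mean). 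Your sketch conflates the zeroth-order sup bound with this middle-derivative case; it is fixable, but it is precisely the technical content of the paper's proof of \eqref{ineq-3} for $i\geq 2$.
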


\begin{proof}
	Following \cite[Lecture II]{Nirenberg1959}, it suffices to prove the following two extreme cases.
	\begin{itemize}
		\item[1) ] For a.e. $ x\in \Omega, $ it holds that
		\begin{equation}\label{ineq-2}
		\abs{u(x)}^n \leq \int_\R \abs{\p_{1} u} dx_1 \prod_{i=2}^{n} \int_{\Torus} \abs{\p_i u} dx_i.
		\end{equation}
		
		\item[2) ] For any $ 1\leq i\leq n, $ and $ 1\leq q <+\infty, 1< r <+\infty, $ it holds that
		\begin{equation}\label{ineq-3}
		\int \abs{\p_i u}^p dx_i \leq C \left(\int \abs{\p_i^2 u}^r dx_i\right)^{\frac{p}{2r}} \left(\int \abs{u}^q dx_i\right)^{\frac{p}{2q}},
		\end{equation}
		where $ \frac{2}{p} = \frac{1}{r} + \frac{1}{q}, $ and $ ``\int" $ indicates the integral over $ \R $ if $ i=1 $ or $ \Torus $ if $ i>1. $
	\end{itemize}
	Thanks to \cref{ave-0-i}, it is straightforward to show \cref{ineq-2}. And \cref{ineq-3} for $ i=1 $ has been verified in \cite{Nirenberg1959}. It remains to show this inequality on $ \Torus=[0,1] $ for $ i>1. $ 
	In fact, for any bounded interval $ I\subset \R, $ it follows from \cite[(2.7)]{Nirenberg1959} that
	\begin{equation}\label{ineq-15}
	\int_I \abs{\p_i u}^p dx_i  \leq C \abs{I}^{1+p-\frac{p}{r}} \left( \int_I \abs{\p_i^2 u}^r  dx_i \right)^{\frac{p}{r}} + C \abs{I}^{-\left( 1+p-\frac{p}{r} \right)} \left( \int_I \abs{u}^q dx_i \right)^{\frac{p}{q}},
	\end{equation}
	where $ C>0 $ is independent of $ I. $ Since $ u $ is periodic in the $ x_i $ direction with period $ 1, $ the first term on the RHS of \cref{ineq-15} must be greater than the second one, as long as the length of $ I $ is equal to a large integer $ N. $
	Similar to \cite[(2.8)]{Nirenberg1959}, one can cover $ [0,N] $ by finite intervals $ I_j \subset [0,2N], $ at each of which, the first term on the RHS of \cref{ineq-15} (where $ I=I_j $) is either greater than or equal to the second one. 
	The proof is to repeat that of L. Nirenberg's in \cite{Nirenberg1959}. However, to make this paper complete, we still give the details here.
	First let $ k > 0 $ be large enough and fixed, and consider $ I=(0, \frac{N}{k}) $ in \cref{ineq-15}. If the first term on the RHS is greater than the second one, set $ I_1 = (0, \frac{N}{k}), $ and it holds that
	\begin{equation}\label{ineq-16}
	\int_{I_1} \abs{\p_i u}^p dx_i \leq C \left(\frac{N}{k}\right)^{1+p-\frac{p}{r}} \left( \int_{I_1} \abs{\p_i^2 u}^r  dx_i \right)^{\frac{p}{r}},
	\end{equation}
	where the constant $ C>0 $ is independent of either $ k $ or $ N. $
	Otherwise, if the second term on the RHS of \cref{ineq-15} is greater, then due to the choice of $ N, $ one can extend the interval $ (0, \frac{N}{k}) $ to a larger one, $ I_1 \subset [0,2N], $ until the two terms become equal. Then one has that
	\begin{equation}\label{ineq-17}
	\int_{I_1} \abs{\p_i u}^p dx_i \leq C \left(\int_{I_1} \abs{\p_i^2 u}^r dx_i\right)^{\frac{p}{2r}} \left(\int_{I_1} \abs{u}^q dx_i\right)^{\frac{p}{2q}},
	\end{equation}
	where the constant $ C>0 $ is independent of either $ k $ or $ N. $
	Starting at the end point of $ I_1, $ one can repeat this process to set $ I_2, I_3, \cdots, $ until $ [0,N] $ is covered. For the fixed $ k, $ there must be finitely many such intervals $ I_j, $ each of which is contained in $ [0,2N]. $ Then summing the estimates \cref{ineq-16} and \cref{ineq-17} together yields that
	\begin{align*}
	\int_0^N \abs{\p_i u}^p dx_i \leq~& C k \left(\frac{N}{k}\right)^{1+p-\frac{p}{r}} \left( \int_0^{N} \abs{\p_i^2 u}^r  dx_i \right)^{\frac{p}{r}} \\
	& + C \left(\int_0^{2N} \abs{\p_i^2 u}^r dx_i\right)^{\frac{p}{2r}} \left(\int_0^{2N} \abs{u}^q dx_i\right)^{\frac{p}{2q}}.
	\end{align*}
	Letting $ k\to +\infty, $ one has that
	\begin{equation*}
	\int_0^N \abs{\p_i u}^p dx_i \leq C \left(\int_0^{2N} \abs{\p_i^2 u}^r dx_i\right)^{\frac{p}{2r}} \left(\int_0^{2N} \abs{u}^q dx_i\right)^{\frac{p}{2q}},
	\end{equation*}
	which yields \cref{ineq-3}, since $ u $ is periodic in the $ x_i $ direction and $ \frac{p}{2r} + \frac{p}{2q} =1. $ 
\end{proof}

\begin{Rem}
\cref{Lem-inter} means that the GN inequality \eqref{gn} holds under the additional condition \eqref{ave-0-i}. Nevertheless, it is difficult to verify \eqref{ave-0-i} for the perturbation $u(x,t)-\ut(x,t)$ for any $x_i$ direction ($i=2,\cdots,n$) even though it holds initially, i.e., $\int_{\mathbb T} w_0(x)dx_i=0 ~ \forall i=2,\cdots,n$, which is much stronger than the original condition \eqref{ave-0}. 

\end{Rem}
	
To prove \cref{Thm}, we shall establish a  GN type inequality on the domain $\R \times \mathbb T^{n-1}$ without any additional condition by applying \cref{Lem-inter} and the following decomposition lemma. 

For any $ k\geq 1, $ set
\begin{equation}\label{A-k}
\begin{aligned}
\A_{k} = \Big\{ & u(x) \text{ is measurable on } \R\times \Torus^k \text{ with } x_1\in\R, 
\text{ and periodic in } x_i\in\Torus, \\
& \text{ satisfying } \int_\Torus u(x) d x_i \equiv 0 \text{ for all } i = 2,\cdots, k+1 \Big\}.
\end{aligned}
\end{equation}
Then for any function $u\in \A_k$, the $ k+1 $-dimensional GN inequality \eqref{gn} holds true. We are ready to decompose general function $u$ in terms of $\A_k$, i.e., 

\begin{Lem}[\bf{Decomposition Lemma}]\label{Lem-decompose}
	One can decompose $ u(x) $ as follows,
	\begin{equation}\label{dec}
	u(x) = \sum\limits_{k=0}^{n-1} u^{(k)}(x) \quad a.e.~ x\in \Omega,
	\end{equation}
	where 	
	\begin{align}
	u^{(0)}(x) & = u^{(0)}(x_1) = \int_{\Torus^{n-1}} u(x_1,x_2,\cdots,x_n) dx_2 \cdots dx_n, \label{u-0} \\
	u^{(k)}(x) & = \sum\limits_{2\leq i_1<\cdots<i_k\leq n} u_{i_1,\cdots,i_k}(x_1,x_{i_1},\cdots, x_{i_k}), \quad k=1,\cdots,n-1, \notag
	\end{align}
	where each $ u_{i_1,\cdots,i_k}(x_1,x_{i_1},\cdots, x_{i_k}) \in \A_k. $
	Moreover, for any $ m\geq 0 $ and $ 1\leq p \leq +\infty, $ it holds that
	\begin{align}
	\norm{\nabla_x^m u^{(0)}}_{L^p(\Omega)} + \sum\limits_{k=1}^{n-1} \sum\limits_{2\leq i_1<\cdots<i_k\leq n} \norm{\nabla^m_x u_{i_1,\cdots,i_k}}_{L^p(\Omega)} \leq C \norm{\nabla^m_x u}_{L^p(\Omega)},  \label{equiv}
	\end{align}
	where $ C>0 $ is independent of $ u. $
\end{Lem}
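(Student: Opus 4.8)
The plan is to realise the decomposition \cref{dec} via the classical ANOVA-type splitting built from the mean-value (conditional-expectation) operators in the periodic directions. For $i=2,\dots,n$ let
\[
(M_i u)(x):=\int_{\Torus} u(x_1,\dots,x_{i-1},s,x_{i+1},\dots,x_n)\,ds,
\]
so $M_i u$ is independent of $x_i$. These operators commute with one another and with every $\p_j$ ($j=1,\dots,n$); one has $M_i^2=M_i$ and, by periodicity, $M_i\p_i=\p_i M_i=0$, so $R_i:=I-M_i$ is again a projection. By Jensen's inequality (equivalently Minkowski's integral inequality) for $1\le p<+\infty$, and trivially for $p=+\infty$, each $M_i$ is a contraction on $L^p(\Omega)$, hence $\norm{R_i}_{L^p\to L^p}\le 2$ for all $1\le p\le+\infty$.

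Expanding the identity $I=\prod_{i=2}^{n}(M_i+R_i)$ produces $2^{\,n-1}$ terms indexed by the subsets $S\subseteq\{2,\dots,n\}$. Setting
\[
u_S:=\Bigl(\prod_{i\in S} R_i\,\prod_{i\in\{2,\dots,n\}\setminus S} M_i\Bigr)u,
\]
we obtain $u=\sum_{S}u_S$ a.e. on $\Omega$. Since $M_i$ has been applied for every $i\notin S$, the function $u_S$ depends only on $x_1$ and on $\{x_i:i\in S\}$; since $M_iR_i=0$, we have $\int_{\Torus}u_S\,dx_i=0$ for each $i\in S$, so $u_S$ belongs to $\A_{|S|}$ in the sense of \cref{A-k}. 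Grouping by cardinality, $u^{(k)}:=\sum_{|S|=k}u_S$ yields precisely the form asserted in \cref{dec}; in particular $u^{(0)}=u_\emptyset=\prod_{i=2}^{n}M_i u$ is the full average \cref{u-0}, and for $k\ge 1$ we set $u_{i_1,\dots,i_k}:=u_{\{i_1,\dots,i_k\}}$.

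For the bound \cref{equiv}, note that every partial derivative commutes with each $M_i$ and $R_i$, so for any multi-index $\alpha$ with $|\alpha|=m$,
\[
\p^{\alpha}u_S=\Bigl(\prod_{i\in S}R_i\,\prod_{i\notin S}M_i\Bigr)\p^{\alpha}u,
\]
whence $\norm{\p^{\alpha}u_S}_{L^p(\Omega)}\le 2^{|S|}\,\norm{\p^{\alpha}u}_{L^p(\Omega)}\le 2^{\,n-1}\,\norm{\nabla^m u}_{L^p(\Omega)}$. Summing over the at most $2^{\,n-1}$ subsets $S$ and over the finitely many components of $\nabla^m$ gives \cref{equiv} with $C=C(n)$.

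There is no serious obstacle here: once the operators $M_i$ are in place, the decomposition and the estimates are elementary. The only points requiring a little care are the commutation of $M_i$ with $\p_{x_1}$ (immediate, as $M_i$ only integrates in $x_i$), the identity $M_i\p_i=0$ (which uses periodicity), and the uniformity in $p\in[1,+\infty]$ of the contraction bound, which is Jensen's inequality for $p<+\infty$ and trivial for $p=+\infty$; the combinatorial bookkeeping of the inclusion--exclusion expansion is routine.
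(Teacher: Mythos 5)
Your proof is correct and amounts to the same construction as the paper's: your ANOVA components $u_S=\bigl(\prod_{i\in S}R_i\prod_{i\notin S}M_i\bigr)u$ coincide with the paper's recursively defined $u_{i_1,\cdots,i_k}$ (the paper subtracts the lower-order pieces and averages over the complementary variables, then checks the zero-mean property by hand, which your identity $M_iR_i=0$ gives immediately), and the bound \cref{equiv} rests on the same Minkowski/Jensen contraction of the averaging operators. So this is essentially the paper's argument, just organized through the expansion of $\prod_{i=2}^{n}(M_i+R_i)$.
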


\begin{Rem}
	To avoid excessive words, we omit the assumptions in \cref{Lem-decompose} that the integrals \cref{u-0} and $$ \int_{\Torus^{n-1-k}} u(x) \frac{dx_2 \cdots dx_n}{dx_{i_1} \cdots dx_{i_k}}, \quad 1 \leq k \leq n-1, \quad 2\leq i_1<\cdots<i_k\leq n, $$ should exist, here and hereafter we use $ \int\limits_{\Torus^{n-1-k}} (\cdot) \frac{dx_2 \cdots dx_n}{dx_{i_1} \cdots dx_{i_k}} $ to denote the integral that is integrated with respect to $ x_2, \cdots,x_n $ except for $ x_{i_1}, \cdots, x_{i_k}. $
\end{Rem}

\begin{proof}
	Starting with $ u^{(0)} $ defined in \cref{u-0}, set 
	\begin{equation}\label{u-1}
	u_i(x_1,x_i) = \int_{\Torus^{n-2}} \left(u-u^{(0)}\right) \frac{dx_2 \cdots dx_n}{dx_i}, \quad i =2,\cdots, n.
	\end{equation}
It is straightforward to check that  $u_i(x_1,x_i) \in \A_1 $. Define $ u^{(1)}(x) := \sum\limits_{i=2}^n u_i$.
Assume that for all $ l = 1, 2, \cdots, k-1 $ with $ k\geq 2 $ and $ 2\leq j_1<\cdots<j_l \leq n, $  all the functions $ u_{j_1,\cdots,j_l} = u_{j_1,\cdots,j_l}(x_1,x_{j_1},\cdots, x_{j_l}) \in \A_l $ and thus $ u^{(l)} := \sum\limits_{2\leq j_1<\cdots<j_l\leq n} u_{j_1,\cdots,j_l} $ have been well-defined. Then for any fixed sequence $ 2\leq i_1 < \cdots < i_k \leq n, $ set
	\begin{align}
	u_{i_1,\cdots,i_k}(x_1,x_{i_1},\cdots,x_{i_k}) &:= \int_{\Torus^{n-1-k}} \Big( u - \sum\limits_{l=0}^{k-1} u^{(l)} \Big) \frac{dx_2 \cdots dx_n}{dx_{i_1}\cdots dx_{i_k}} \label{u-k} \\
	& = \int_{\Torus^{n-1-k}} \Big( u- \sum\limits_{l=0}^{k-2}u^{(l)} - \sum\limits_{2\leq j_1<\cdots<j_{k-1}\leq n} u_{j_1,\cdots,j_{k-1}} \Big) \frac{dx_2 \cdots dx_n}{dx_{i_1}\cdots dx_{i_k}}. \notag
	\end{align}
	It is noted that for $ r=1,\cdots,k, $ one has 
	\begin{equation}
	\int_{\Torus} u_{j_1,\cdots,j_{k-1}} dx_{i_r} = \begin{cases}
	0 & \text{ if } i_r \in \{ j_1,\cdots,j_{k-1} \}, \\
	u_{j_1,\cdots,j_{k-1}} \qquad & \text{ if } i_r \notin \{ j_1,\cdots,j_{k-1} \}.
	\end{cases}
	\end{equation}
	Then it holds that 
	\begin{align*}
	\int_{\Torus}  u_{i_1,\cdots,i_k} dx_{i_r}
	=~ & \int_{\Torus^{n-k}} \Big( u - \sum\limits_{l=0}^{k-2} u^{(l)} \Big) \frac{dx_2 \cdots dx_n}{dx_{i_1}\cdots dx_{i_{r-1}}dx_{i_{r+1}} \cdots dx_{i_{k}}} \\
	& - \sum\limits_{
	2\leq j_1<\cdots<j_{k-1}\leq n} \int_{\Torus^{n-k}} u_{j_1,\cdots,j_{k-1}} \frac{dx_2 \cdots dx_n}{dx_{i_1}\cdots dx_{i_{r-1}}dx_{i_{r+1}} \cdots dx_{i_{k}}} \\
	=~& u_{i_1, \cdots,i_{r-1},i_{r+1},\cdots, i_k} - u_{i_1, \cdots,i_{r-1},i_{r+1},\cdots, i_k} \\
	=~ & 0,
	\end{align*}	
	which means $ u_{i_1,\cdots,i_k} \in \A_k$ and then the decomposition  \eqref{dec} for function $u$ holds. 
	It remains to show \cref{equiv}. It follows from \cref{u-0} and Minkowski inequality that
	\begin{equation*}
	\begin{array}{ll}
	\norm{\nabla_x^m u^{(0)}}_{L^p(\R)} & \leq \norm{ \norm{\nabla_x^m u}_{L^1(\Torus^{n-1};dx_2\cdots dx_n)} }_{L^p(\R;dx_1)} \\
	& \leq \norm{ \norm{\nabla_x^m u}_{L^p(\R;dx_1)} }_{L^1(\Torus^{n-1};dx_2\cdots dx_n)} \\
	& \leq \norm{\nabla^m_x u}_{L^p(\Omega)},
	\end{array}
	\end{equation*} 
	where the last inequality is derived from the H\"{o}lder inequality. Similarly, one can obtain from \cref{u-1} that for any $ 2\leq i\leq n, $
	\begin{equation*}
	\norm{\nabla_x^m u_i} \leq \norm{\nabla_x^m \left(u-u^{(0)}\right)}_{L^p(\Omega)} \leq 2 \norm{\nabla^m_x u}_{L^p(\Omega)}.
	\end{equation*}
	The remaining functions $ u_{i_1,\cdots,i_k} $ defined in \cref{u-k} can be proved in the same way, and then the proof of Lemma \ref{Lem-decompose} is completed.
\end{proof}
    
\begin{proof}[Proof of \cref{Thm-G-N}]
We first decompose $ u(x) = \sum\limits_{k=0}^{n-1} u^{(k)}(x) $ as in \cref{Lem-decompose}. Then it follows from Theorem \ref{Lem-inter} and Lemma \ref{Lem-decompose} that $ u^{(0)} \in L^q(\R), \nabla^m u^{(0)} \in L^r(\R), $ and it satisfies the $ 1 $-d GN inequality \eqref{gn}; each $ u_{i_1,\cdots,i_k} \in L^q(\Omega), \nabla^m u_{i_1,\cdots,i_k} \in L^r(\Omega) $ and it satisfies the $ (k+1) $-d GN inequality \eqref{gn}. Hence, 
\begin{align*}
	\norm{\nabla^j u^{(0)}}_{L^p(\R)} \leq~& C \norm{\nabla^m u^{(0)}}_{L^{r}(\R)}^{\theta_0} \norm{u^{(0)}}_{L^{q}(\R)}^{1-\theta_0}
	\leq C \norm{\nabla^m u}_{L^{r}(\Omega)}^{\theta_0} \norm{u}_{L^{q}(\Omega)}^{1-\theta_0}; \\
	\norm{\nabla^j u^{(k)}}_{L^p(\Omega)} \leq~& \sum\limits_{2\leq i_1<\cdots<i_k\leq n} \norm{\nabla^j u_{i_1,\cdots,i_k} }_{L^p(\Omega)} \\
     \leq~& C \sum\limits_{2\leq i_1<\cdots<i_k\leq n} \norm{\nabla^m u_{i_1,\cdots,i_k}}_{L^{r}(\Omega)}^{\theta_k} \norm{u_{i_1,\cdots,i_k}}_{L^{q}(\Omega)}^{1-\theta_k} \\
	\leq~& C \norm{\nabla^{m} u}_{L^{r}(\Omega)}^{\theta_k} \norm{u}_{L^{q}(\Omega)}^{1-\theta_k}, \qquad 1\leq k\leq n-1,
\end{align*}
where the indices $ \theta_0,\cdots, \theta_{n-1} $ are that introduced in \cref{Thm-G-N}. The proof is finished.

\end{proof}

\vspace{0.2cm}

In order to use the $ L^p $ energy method developed in \cite{KNM2004}, we shall establish the following interpolation inequality by \cref{Thm-G-N}.

\begin{Cor}[Interpolation Inequality in $\Omega$]\label{Cor-inter}
	For any $ 2\leq p<\infty $ and $ 1\leq q \leq p, $ it holds that
	\begin{align}\label{inter-in}
	\norm{u}_{L^p(\Omega)}\leq C \sum\limits_{k=0}^{n-1} 
	\norm{\nabla(\abs{u}^{\frac{p}{2}})}^{\frac{2\gamma_k}{1+\gamma_k p}}_{L^2(\Omega)}\norm{u}^{\frac{1}{1+\gamma_k p}}_{L^q(\Omega)},
	\end{align}
	where $\gamma_k= \frac{k+1}{2} \left(\frac{1}{q}- \frac{1}{p}\right) $ and the constant $C=C(p,q,n)>0$ is independent of $ u. $
\end{Cor}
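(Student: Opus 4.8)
The plan is to derive \cref{inter-in} from the GN type inequality \cref{G-N-type-2} by applying it to the function $v := \abs{u}^{p/2}$, which is the standard device for turning an $L^p$ bound into one involving $\nabla(\abs{u}^{p/2})$. First I would set $m=1$ and $j=0$ in \cref{G-N-type-2}, and apply it to $v$ rather than to $u$ directly. Note that $v$ is periodic in the $x_i$ directions for $i=2,\dots,n$ whenever $u$ is, so the hypotheses of \cref{Thm-G-N} are met. Choosing the target Lebesgue exponent on the left to be $\bar p := 2$ (so that $\norm{v}_{L^2(\Omega)} = \norm{u}_{L^p(\Omega)}^{p/2}$), and the low-order exponent on the right to be $\bar q := 2q/p$ (so that $\norm{v}_{L^{\bar q}(\Omega)} = \norm{u}_{L^q(\Omega)}^{p/2}$, which is legitimate because $1\le q\le p$ forces $\bar q \ge 1$), while keeping $r=2$ for the gradient term, the GN relation $\frac{1}{\bar p} = \left(\frac12 - \frac{1}{k+1}\right)\theta_k + \frac{1}{\bar q}(1-\theta_k)$ determines each exponent $\theta_k$; a direct computation gives $\theta_k = \frac{2\gamma_k}{1+\gamma_k p} \cdot \frac{p}{2} \cdot \frac{1}{?}$ — more precisely one solves for $\theta_k$ and checks it equals the exponent forced by matching powers of $p/2$ in $\norm{u}_{L^p}^{p/2} \le C\sum_k \norm{\nabla v}_{L^2}^{\theta_k}\norm{v}_{L^{\bar q}}^{1-\theta_k}$, i.e. after dividing through by $\norm{u}_{L^p}^{(p/2)(1-\theta_k)\cdot(\text{something})}$ one recovers $\gamma_k = \frac{k+1}{2}(\frac1q-\frac1p)$.

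The key computational step is the bookkeeping: from $\norm{u}_{L^p}^{p/2} \le C\sum_{k=0}^{n-1} \norm{\nabla(\abs{u}^{p/2})}_{L^2}^{\theta_k} \norm{u}_{L^q}^{(p/2)(1-\theta_k)}$ I would like to conclude \cref{inter-in}, but the sum on the right prevents a naive division. The clean way is to treat each summand separately: for each fixed $k$, either $\norm{u}_{L^p}^{p/2} \le C(n)\,\norm{\nabla(\abs{u}^{p/2})}_{L^2}^{\theta_k}\norm{u}_{L^q}^{(p/2)(1-\theta_k)}$ holds, or it does not; since the full inequality \cref{G-N-type-2} bounds $\norm{u}_{L^p}^{p/2}$ by the sum of the $n$ terms, at least one term is $\ge \frac1n \norm{u}_{L^p}^{p/2}$, so that particular $k$ satisfies the single-term estimate with constant $Cn$. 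For that $k$, divide both sides by $\norm{u}_{L^p}^{(p/2)(1-\theta_k)\cdot 0}$... — rather, one isolates $\norm{u}_{L^p}$ by writing the single-term bound as $\norm{u}_{L^p}^{p/2} \le C\norm{\nabla(\abs{u}^{p/2})}_{L^2}^{\theta_k}\norm{u}_{L^q}^{(p/2)(1-\theta_k)}$ and raising to the power $2/p$, which gives $\norm{u}_{L^p} \le C \norm{\nabla(\abs{u}^{p/2})}_{L^2}^{2\theta_k/p}\norm{u}_{L^q}^{1-\theta_k}$; a final check that $\frac{2\theta_k}{p} = \frac{2\gamma_k}{1+\gamma_k p}$ and $1-\theta_k = \frac{1}{1+\gamma_k p}$ completes the identification of exponents. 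Then summing over all $k$ (adding the harmless extra nonnegative terms) yields exactly \cref{inter-in}.

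I would also need to confirm the side conditions in \cref{Thm-G-N} do not obstruct this. The only nontrivial ones are: exceptional case 1) requires $u\to0$ at infinity when $j=0$, $rm<k+1$, $q=+\infty$ — but here $q = \bar q = 2q/p \le 2 < +\infty$ since $q\le p$ and... well, if $q=p$ then $\bar q = 2$, still finite, so case 1) never applies; exceptional case 2) requires $\theta_k<1$ when $1<r<+\infty$ and $m-j-\frac{k+1}{r}$ is a nonnegative integer — with $m=1,j=0,r=2$ this is $1-\frac{k+1}{2}$, a nonnegative integer only when $k=0$ (value $\frac12$, not an integer) or $k=1$ (value $0$, which is a nonnegative integer). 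So for $k=1$ I must verify $\theta_1<1$; since $\theta_1 = \frac{2\gamma_1}{1+\gamma_1 p}\cdot\frac p2 = \frac{\gamma_1 p}{1+\gamma_1 p} < 1$ always (as $\gamma_1 p \ge 0$), this is automatic.

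The main obstacle is not conceptual but the exponent arithmetic — carefully solving the GN relation $\frac12 = (\frac12 - \frac1{k+1})\theta_k + \frac{p}{2q}(1-\theta_k)$ for $\theta_k$ and matching $\frac{2\theta_k}{p}$ and $1-\theta_k$ against the claimed $\frac{2\gamma_k}{1+\gamma_k p}$ and $\frac{1}{1+\gamma_k p}$ with $\gamma_k = \frac{k+1}{2}(\frac1q-\frac1p)$ — together with the small subtlety that \cref{G-N-type-2} delivers a sum rather than a single term, handled by the pigeonhole argument above. Everything else (periodicity of $\abs{u}^{p/2}$, admissibility of $\bar q \ge 1$, finiteness of the norms) is routine.
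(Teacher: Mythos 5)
Your overall strategy (apply \cref{Thm-G-N} with $m=1$, $j=0$, $r=2$ to $v=\abs{u}^{p/2}$ and match exponents) is the same as the paper's, your exponent arithmetic is correct where it applies (indeed $\theta_k=\frac{\gamma_k p}{1+\gamma_k p}$ gives $\frac{2\theta_k}{p}=\frac{2\gamma_k}{1+\gamma_k p}$, $1-\theta_k=\frac{1}{1+\gamma_k p}$), and your handling of the sum and of the exceptional cases is fine. But there is a genuine gap: you take the low exponent for $v$ to be $\bar q=2q/p$ and assert that $1\le q\le p$ forces $\bar q\ge 1$. This is false: $\bar q\ge 1$ is equivalent to $p\le 2q$, and it fails for instance when $q=1$, $p=4$. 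The corollary is claimed for all $1\le q\le p$, and the regime $p>2q$ is exactly the one used later (the a priori estimates apply \cref{Cor-inter} with $q=1$ and $p$ arbitrarily large). For $p>2q$ your choice $\bar q=2q/p<1$ lies outside the admissible range $1\le q\le+\infty$ in \cref{Thm-G-N}, so the inequality you invoke is simply not available there, and no version of the GN inequality with an $L^{\bar q}$ norm for $\bar q<1$ is established in the paper.

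The paper closes precisely this case by a different choice of exponents plus an absorption argument: for $p>2q$ it keeps the low exponent of $v$ equal to $2r_k$ with $r_k\in\left(\frac12,1\right)$ (hence admissible), writes $\norm{v}_{L^{2r_k}(\Omega)}=\norm{u}_{L^{pr_k}(\Omega)}^{p/2}$, and interpolates $\norm{u}_{L^{pr_k}(\Omega)}\le\norm{u}_{L^q(\Omega)}^{1-\rho_k}\norm{u}_{L^p(\Omega)}^{\rho_k}$ with $\rho_k=\frac{pr_k-q}{(p-q)r_k}$. This puts a factor $\norm{u}_{L^p(\Omega)}^{\rho_k(1-\theta_k)}$ on the right, which must be absorbed into the left-hand side; the key point is that the $r_k$ can be tuned so that $\rho_k(1-\theta_k)=\delta$ is the \emph{same} for every $k$, which makes the division by $\norm{u}_{L^p(\Omega)}^{\delta}$ legitimate despite the sum, and a final computation shows the resulting exponents are again $\frac{2\gamma_k}{1+\gamma_k p}$ and $\frac{1}{1+\gamma_k p}$. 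To repair your proof you need to add this (or an equivalent) treatment of the case $p>2q$; your argument as written proves \cref{inter-in} only for $2\le p\le 2q$.
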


\begin{proof}
For the function $ v(x) = \abs{u(x)}^{\frac{p}{2}}, $ it follows from \cref{Thm-G-N} that
\begin{align*}
\norm{v}_{L^2(\Omega)} \leq  C \sum\limits_{k=0}^{n-1} \norm{\nabla v}_{L^2(\Omega)}^{\theta_k} \norm{v}_{L^{2 r_k}(\Omega)}^{1-\theta_k},
\end{align*}
where $ \frac{1}{2} = \left( \frac{1}{2} - \frac{1}{k+1} \right) \theta_k + \frac{1}{2 r_k} \left(1-\theta_k\right) $ with $ \frac{1}{2} \leq r_k \leq 1 $ and $ 0 \leq \theta_k \leq 1 $ for $ k=0,1,\cdots,n-1. $ This yields that
\begin{equation}\label{ineq-4}
\norm{u}_{L^p(\Omega)} \leq C \sum\limits_{k=0}^{n-1} \norm{\nabla \abs{u}^{\frac{p}{2}}}_{L^2(\Omega)}^{\frac{2}{p}\theta_k} \norm{u}_{L^{pr_k}(\Omega)}^{1-\theta_k}.
\end{equation}
If $ 2\leq p\leq 2q, $ choosing $ r_k = \frac{q}{p} \geq \frac{1}{2} $ for $ k=0,\cdots,n-1 $ can finish the proof. 
If $ p>2q, $ let $ \frac{1}{2}<r_k <1, $ which will be determined later. Since $ q < p r_k < p, $  it follows from interpolation  that
\begin{equation}\label{ineq-5}
\norm{u}_{L^{p r_k}(\Omega)} \leq \norm{u}_{L^q(\Omega)}^{1-\rho_k} \norm{u}_{L^p(\Omega)}^{\rho_k},
\end{equation}
where $ \rho_k = \frac{p r_k - q}{(p-q) r_k} \in (0,1). $ Plugging \cref{ineq-5} into \cref{ineq-4} yields that
\begin{equation}\label{ineq-6}
\norm{u}_{L^p(\Omega)} \leq C \sum\limits_{k=0}^{n-1} \norm{\nabla \abs{u}^{\frac{p}{2}}}_{L^2(\Omega)}^{\frac{2}{p}\theta_k} \norm{u}_{L^q(\Omega)}^{(1-\rho_k)(1-\theta_k)} \norm{u}_{L^p(\Omega)}^{\rho_k(1-\theta_k)}.
\end{equation}
For $ k = 0,1,\cdots, n-1 $ with $ n\geq 2 $ and $ r_k \in \left( \frac{1}{2},1\right), $ one can obtain by simple calculations that
\begin{equation}\label{ineq-7}
\rho_k(1-\theta_k) = \frac{p}{p-q} \times \frac{r_k-\frac{q}{p}}{\frac{k+1}{2} -\frac{k-1}{2} r_k} \quad \in \left( \frac{2(p-2q)}{(k+3)(p-q)},1 \right).
\end{equation}
Thus, for any $ \delta \in \left(\frac{2(p-2q)}{3(p-q)},1\right), $ there exist $ r_0, r_1, \cdots, r_{n-1} \in \left(\frac{1}{2}, 1\right) $ such that $ \rho_k(1-\theta_k) = \delta $ for any $ k=0,1,\cdots,n-1. $ Then \cref{ineq-6} yields that
\begin{equation*}
\norm{u}_{L^p(\Omega)} \leq C \sum\limits_{k=0}^{n-1} \norm{\nabla \abs{u}^{\frac{p}{2}}}_{L^2(\Omega)}^{\frac{2\theta_k}{p(1-\delta)}} \norm{u}_{L^q(\Omega)}^{\frac{(1-\rho_k)(1-\theta_k)}{1-\delta}}.
\end{equation*}
A direct calculation implies that $ \frac{2\theta_k}{p(1-\delta)} = \frac{2\gamma_k}{1+\gamma_k p} $ and $ \frac{(1-\rho_k)(1-\theta_k)}{1-\delta} = \frac{1}{1+\gamma_k p} $ for all $ k=0,\cdots,n-1, $ where $ \gamma_k = \frac{k+1}{2} \left(\frac{1}{q}-\frac{1}{p}\right)$, and  the proof is completed.
\end{proof}

\begin{Lem}[\cite{KNM2004} Lemma 2.2]\label{Lem-II-der}
	For any $ 2\leq p<\infty, $ it holds that
	\begin{equation}\label{inter-deriv}
	\norm{\p_i u}_{L^p(\Omega)}\leq C
	\norm{\p_i \left(\abs{\p_i u}^{\frac{p}{2}}\right) }^{\frac{2}{p+2}}_{L^2(\Omega)}\norm{u}^{\frac{2}{p+2}}_{L^p(\Omega)}, \quad i=1, \cdots, n,
	\end{equation}
	where the constant $C>0$ is independent of $ u. $
\end{Lem}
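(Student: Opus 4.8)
Following \cite[Lemma~2.2]{KNM2004}, the plan is to deduce \cref{inter-deriv} from its one‑dimensional counterpart, by slicing $\Omega$ along the $x_i$ direction and then integrating the resulting one‑dimensional estimate over the remaining variables with H\"older's inequality. I outline the argument below.

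First I would establish the scalar inequality: for $g=g(s)$ smooth on $\R$ (in the case $i=1$) or on $\Torus$ (in the case $i>1$), with $g$ and $g'$ decaying at infinity in the former case,
\begin{equation*}
\norm{g'}_{L^p}^{\frac{p+2}{2}}\leq \frac{2(p-1)}{p}\,\norm{\big(\abs{g'}^{\frac{p}{2}}\big)'}_{L^2}\,\norm{g}_{L^p}.
\end{equation*}
Put $h:=\abs{g'}^{\frac{p}{2}}$. Since $p\geq2$, one has the almost‑everywhere identities $\big(\abs{g'}^{p-1}\operatorname{sgn}(g')\big)'=(p-1)\abs{g'}^{p-2}g''$ and $\abs{h'}=\tfrac{p}{2}\abs{g'}^{\frac{p}{2}-1}\abs{g''}$. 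Integrating by parts, the boundary terms vanishing by periodicity, resp.\ by the decay at infinity, gives
\begin{equation*}
\norm{g'}_{L^p}^{p}=\int \abs{g'}^{p-1}\operatorname{sgn}(g')\,g'\leq (p-1)\int \abs{g}\,\abs{g'}^{p-2}\abs{g''}=\frac{2(p-1)}{p}\int \abs{g}\,\abs{g'}^{\frac{p}{2}-1}\abs{h'}.
\end{equation*}
Applying the Cauchy--Schwarz inequality, and then H\"older's inequality with exponents $\tfrac{p}{2}$ and $\tfrac{p}{p-2}$ to $\int\abs{g}^{2}\abs{g'}^{p-2}$, yields $\norm{g'}_{L^p}^{p}\leq\tfrac{2(p-1)}{p}\norm{h'}_{L^2}\norm{g}_{L^p}\norm{g'}_{L^p}^{\frac{p-2}{2}}$, which is the asserted inequality (the case $\norm{g'}_{L^p}=+\infty$ being vacuous).

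Next I would use this slice by slice. Writing $x'$ for the variables other than $x_i$, I apply the scalar inequality to $s\mapsto u(x_1,\dots,x_{i-1},s,x_{i+1},\dots,x_n)$ for a.e.\ $x'$, raise both sides to the $p$‑th power, integrate over $x'$, and apply H\"older's inequality with the conjugate exponents $\tfrac{p+2}{p}$ and $\tfrac{p+2}{2}$ to get
\begin{equation*}
\norm{\p_i u}_{L^p(\Omega)}^{p}\leq C\,\norm{\p_i\big(\abs{\p_i u}^{\frac{p}{2}}\big)}_{L^2(\Omega)}^{\frac{2p}{p+2}}\,\norm{u}_{L^p(\Omega)}^{\frac{2p}{p+2}};
\end{equation*}
taking the $p$‑th root gives \cref{inter-deriv}. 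The one delicate point is the rigorous justification of the integration by parts and of the chain rule for $\abs{g'}^{p/2}$ when $u$ only has Sobolev regularity: the hypothesis $p\geq2$ is exactly what makes $s\mapsto\abs{s}^{p-1}\operatorname{sgn}(s)$ of class $C^1$ and $s\mapsto\abs{s}^{p/2}$ locally Lipschitz, and the vanishing of the boundary terms as $\abs{x_1}\to+\infty$ together with the remaining a.e.\ differentiability subtleties are handled in the standard way, namely by first proving the estimate for functions that are smooth, compactly supported in $x_1$ and periodic in $x_2,\dots,x_n$, and then passing to the general case by density.
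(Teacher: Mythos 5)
Your proposal is correct and follows essentially the same route as the paper, which simply invokes \cite[Lemma 2.2]{KNM2004} and notes that the integration-by-parts plus H\"older argument carries over to $\Omega$; you have merely written out that argument in full, with periodicity supplying the vanishing of boundary terms in the $\Torus$-directions and decay (via the density reduction) in the $x_1$-direction. The only cosmetic slip is the phrase ``raise both sides to the $p$-th power,'' which should be applied to the scalar inequality in the equivalent form $\norm{g'}_{L^p}\leq C\norm{(\abs{g'}^{p/2})'}_{L^2}^{2/(p+2)}\norm{g}_{L^p}^{2/(p+2)}$ so that the subsequent H\"older step with exponents $\tfrac{p+2}{p}$ and $\tfrac{p+2}{2}$ reads exactly as you state; with that reading the computation is complete.
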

\begin{proof}
The inequality \eqref{inter-deriv} has been established in \cite{KNM2004} for the whole space $ \R^n, $ and it is still true in the domain $\Omega$ with the aid of integration by parts and the H\"{o}lder inequality as in \cite{KNM2004}.
\end{proof}


\section{A priori estimates and proof}\label{Sec-Apriori}

Denote the perturbation by $ \phi(x,t):= u(x,t)-\ut(x,t), $ which is periodic in the $ x_i $ direction for $ i=2,\cdots,n, $ and satisfies
\begin{align}
\p_t \phi +\sum_{i=1}^n\partial_i\left[f_i(\ut+\phi)-f_i(\ut)\right]  & = \triangle \phi-h, \label{equ-phi} \\
\phi(x,0) & = 0.  \label{ic-phi}
\end{align}
\begin{Rem}
	In fact, given the initial data $ u_0(x) = \ut^R(x_1,0) + v_0(x_1) +w_0(x) $ stated in \cref{Rem-ic}, the initial condition \cref{ic-phi} turns to $ \phi(x,0) = v_0(x_1) \in L^1(\R) \cap H^1(\R) $ instead, which makes no difference in the following proof.
\end{Rem}

We shall prove the global existence and large time behavior of solution $\phi(x,t)$ to the Cauchy problem \eqref{equ-phi} and \eqref{ic-phi}. The global existence  can be established by obtaining the a priori estimates \cref{aprio-1} and \cref{aprio-2} below, since the local existence of the solution $ \phi $ to \cref{equ-phi} with the initial data in $ L^1(\Omega) \cap H^1(\Omega) $ is standard. 

\begin{Thm}[A priori estimates]\label{Thm-apriori}
	Assume that $ \phi(x,t)$ is the unique smooth solution  to \cref{equ-phi},\cref{ic-phi} for any $t\in [0,T]$, then it holds that 
	\begin{align}
	\norm{\phi(t)}_{L^p(\Omega)}
	& \leq C_p (1+t)^{-\frac{1}{2}+\frac{1}{2p}} \qquad \forall p\in[1,+\infty), \label{aprio-1} \\
	\norm{\nabla \phi(t)}_{L^p(\Omega)}
	& \leq C_p (1+t)^{-1+\frac{1}{2p}}, \qquad \forall p\in[2,+\infty), \label{aprio-2}
	\end{align}
	where the constants $ C>0 $ and $C_p>0$ are independent of $t$.
\end{Thm}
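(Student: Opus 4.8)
The plan is to prove the a priori estimates \cref{aprio-1} and \cref{aprio-2} by the $L^p$ energy method of \cite{KNM2004}, adapted to the domain $\Omega = \R\times\Torus^{n-1}$ via the interpolation inequality \cref{Cor-inter} and \cref{Lem-II-der}. The key structural feature to exploit is that the equation \cref{equ-phi} for $\phi = u - \ut$ has the conservation form $\p_t\phi + \sum_i\p_i[f_i(\ut+\phi)-f_i(\ut)] = \lap\phi - h$, where the source $h$ decays exponentially by \cref{est-h}, and where the genuine nonlinearity $f_1'' \geq a_0 > 0$ together with the monotonicity $\p_1\ut^R > 0$ (via $\p_1 g > 0$, see \cref{entropy}) produces a good dissipative term when we differentiate the flux in the $x_1$ direction. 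The whole argument runs by a continuity/bootstrap argument on $[0,T]$, so I will assume the a priori bound that $\norm{\phi(t)}_{L^\infty}$ is small (which is consistent with \cref{aprio-1}, \cref{aprio-2} and the smallness of $w_0$) and close the estimates.

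**Step 1: the basic $L^p$ estimate.** Multiply \cref{equ-phi} by $p|\phi|^{p-2}\phi$ and integrate over $\Omega$. The flux term, after integration by parts, becomes $-p(p-1)\int |\phi|^{p-2}\phi\,\p_i[f_i(\ut+\phi)-f_i(\ut)]$; expanding $f_i(\ut+\phi)-f_i(\ut) = \int_0^1 f_i'(\ut+s\phi)\,ds\,\phi$ and isolating the $i=1$ term, one splits off a nonnegative contribution controlled by $\p_1\ut$ (this is where $f_1''\geq a_0$ enters, giving a term like $a_0\int \p_1\ut\, |\phi|^p$ with a favorable sign up to lower-order errors) and error terms bounded using $\norm{\p_1\ut}_{L^\infty}\leq C/t$ from \cref{entropy} and $\norm{\p_i w_{l,r}}_{L^\infty}\leq Ce^{-2\alpha t}$. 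The diffusion term yields $-\frac{4(p-1)}{p}\norm{\nabla(|\phi|^{p/2})}_{L^2}^2$. The source term contributes $\leq p\norm{h}_{L^p}\norm{\phi}_{L^p}^{p-1} \leq C e^{-\alpha t}\norm{\phi}_{L^p}^{p-1}$. Using \cref{Cor-inter} to absorb $\norm{\phi}_{L^p}^p$ into the dissipation $\norm{\nabla(|\phi|^{p/2})}_{L^2}^2$ plus $\norm{\phi}_{L^q}$-terms (taking $q=1$, say), one derives a differential inequality of the form $\frac{d}{dt}\norm{\phi}_{L^p}^p + c\norm{\nabla(|\phi|^{p/2})}_{L^2}^2 \leq C(1+t)^{-a}(\cdots)$; integrating and using $\phi(x,0)=0$ (or $\phi(\cdot,0)=v_0\in L^1\cap H^1$) yields \cref{aprio-1}. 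One should do the $L^1$ and $L^2$ cases first (or bootstrap from $L^1$), then general $p$; the $L^1$ estimate may require a standard regularization of $|\phi|$ since $|\phi|^{p-2}\phi$ is singular at $p=1$.

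**Step 2: the gradient estimate.** Differentiate \cref{equ-phi} in $x_i$, multiply by $p|\p_i\phi|^{p-2}\p_i\phi$, sum over $i=1,\dots,n$, and integrate. The leading nonlinear term produces $\sum_{i,j}\int f_j''(\ut+\phi)(\p_i\ut+\p_i\phi)\,\p_j\phi\,|\p_i\phi|^{p-2}\p_i\phi$ plus the diffusion dissipation $-\tfrac{4(p-1)}{p}\sum_i\norm{\p_i(|\p_i\phi|^{p/2})}_{L^2}^2$; one controls the nonlinear terms by Hölder, using \cref{Lem-II-der} to trade $\norm{\p_i\phi}_{L^p}$ against $\norm{\p_i(|\p_i\phi|^{p/2})}_{L^2}^{2/(p+2)}\norm{\phi}_{L^p}^{2/(p+2)}$ and absorbing into the dissipation, with the remaining factors estimated by the already-established \cref{aprio-1}. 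The terms involving $\p_i\ut$ require $\norm{\p_1^2\ut^R}_{L^p}$-type bounds and $\norm{\p_i w_{l,r}}_{W^{1,\infty}}\leq Ce^{-2\alpha t}$; these, together with the decay of $h$ and its derivatives, give a differential inequality leading to \cref{aprio-2} after integration.

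**Main obstacle.** The delicate point is handling the interplay between the two competing scales: the rarefaction wave contributes at the algebraic rate $(1+t)^{-1}$ (through $\p_1\ut^R$) while the periodic oscillations $w_l, w_r$ contribute exponentially decaying but spatially nondecaying terms — the danger is that error terms like $(u_r-u_l)g(1-g)\sigma_i\p_i w_l$ in $h$, or the boundary-type contributions in the integration by parts, fail to close because $\phi\notin L^1_{x_i}$ for $i\geq 2$. This is precisely why the GN-type inequality \cref{Cor-inter} on $\Omega$ (which does not require the vanishing-average condition \cref{ave-0-i} that $\phi$ fails to satisfy) is essential: it lets the dissipation $\norm{\nabla(|\phi|^{p/2})}_{L^2}^2$ control $\norm{\phi}_{L^p}^p$ using only $\norm{\phi}_{L^1}$. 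The other subtle bookkeeping issue is matching the exponents $\theta_k, \gamma_k$ across all $k=0,\dots,n-1$ so that the algebraic time-weights from the different summands in \cref{inter-in} are all at least as good as the claimed $(1+t)^{-\frac12+\frac{1}{2p}}$; the worst case is $k=0$ (the one-dimensional mode $u^{(0)}$), which is consistent with the rate being governed by the genuinely one-dimensional decay $t^{-1/2}$ of the scalar rarefaction problem.
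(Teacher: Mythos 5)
Your strategy is essentially the paper's: first a uniform $L^1(\Omega)$ bound (the paper obtains it by multiplying \cref{equ-phi} by $S_\delta'(\phi)$ for a $C^2$ convex regularization $S_\delta$ of $\abs{\cdot}$, exactly the regularization you anticipate), then the $L^p$ energy estimate closed with \cref{Cor-inter} (with $q=1$) and the dissipation $\norm{\nabla\abs{\phi}^{p/2}}_{L^2}^2$, and finally the gradient estimate via \cref{Lem-II-der} together with a decomposition-based GN bound for the quadratic term $\int\abs{\phi}\,\abs{\p_i\phi}^{p-1}\abs{\nabla\p_i\phi}\,dx$ (the paper's claim \cref{claim}, proved by splitting $\abs{\p_i\phi}^{p/2}$ via \cref{Lem-decompose}). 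Two points of precision. First, in Step 1 the algebraic rate does not follow by ``integrating the differential inequality'': the right-hand side is only exponentially small and carries no algebraic decay; the paper multiplies by $(1+t)^\beta$, and the term $\int_0^T(1+t)^{\beta-1}\norm{\phi}_{L^p}^p\,dt$ is then absorbed through \cref{Cor-inter} and the uniform $L^1$ bound, which is what produces $(1+T)^{\beta-\frac{p-1}{2}}$ and hence \cref{aprio-1} (an equivalent nonlinear-ODE argument would also work, but some such mechanism must be made explicit). Second, the sign-definite term is generated by $f_1''\geq a_0$ together with $\p_1\ut^R>0$ (cf. \cref{entropy}), not by $\p_1\ut$: the ansatz $\ut$ contains $\p_1 w_{l,r}$ of indefinite sign, and those contributions, like $\p_1(\ut-\ut^R)$ and $\p_i\ut$ for $i\geq2$, must be relegated to the exponentially decaying errors via \cref{Lem-per}.

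The one genuine deviation is your opening bootstrap hypothesis that $\norm{\phi(t)}_{L^\infty}$ is small, justified by ``the smallness of $w_0$.'' The theorem assumes no smallness: $w_0$ is an arbitrary zero-mean function in $H^{[\frac n2]+2}(\Torus^n)$, so a proof conditioned on small data would establish a strictly weaker statement than \cref{Thm-apriori}. Fortunately smallness is never needed and you should simply drop it: for this scalar equation the maximum principle gives a uniform (not small) bound on $\norm{\phi}_{L^\infty}$, after which all nonlinear coefficients $f_i',f_i''$ are evaluated on a fixed compact set, and the favorable sign comes from convexity and $\p_1\ut^R>0$ irrespective of amplitude; this is exactly how the paper's proof begins, with no continuity argument at all.
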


\begin{proof}
	The uniform bound of $ \norm{\phi(t)}_{L^\infty(\Omega)} $ follows from the maximum principle easily. And following the $L^p$ energy method as in \cite{KNM2004}, we first prove \eqref{aprio-1}.
	
	\textbf{Step 1. } We first show the $ L^1 $ estimates. Given $ \delta>0, $ let $ S_\delta(\eta) $ be a $ C^2 $ convex approximation to the function $ \abs{\eta}, $ e.g.
	\begin{equation*}
	S_{\delta}(\eta) =
	\begin{cases}
	-\eta, \quad &\eta \leq -\delta; \\
	-\frac{\eta^4}{8\delta^3} + \frac{3\eta^2}{4\delta} + \frac{3\delta}{8}, \quad &-\delta<\eta\leq \delta; \\
	\eta, \quad &\eta > \delta.
	\end{cases}
	\end{equation*}
	Multiplying $ S_\delta'(\phi) $ on both sides of \cref{equ-phi} yields that
	\begin{equation}\label{ineq-11}
	\begin{aligned}
	& \p_t S_\delta(\phi) + S_\delta''(\phi) \abs{\nabla \phi}^2 + \int_0^\phi S_\delta''(\eta) \left( f_1'(\ut+\eta)-f_1'(\ut) \right) d\eta~ \p_1 \ut^R \\
	=~ & \sum\limits_{i=1}^n \p_i \{\cdots\} - \int_0^\phi S_\delta''(\eta) \left( f_1'(\ut+\eta)-f_1'(\ut) \right) d\eta~ \p_1 \left(\ut - \ut^R\right) \\
	& -  \sum\limits_{i=2}^n \int_0^\phi S_\delta''(\eta) \left( f_i'(\ut+\eta)-f_i'(\ut) \right) d\eta~ \p_i \ut - S_\delta'(\phi) h,
	\end{aligned}	
	\end{equation}
	where
	\begin{align*}
	\{\cdots\} = S_\delta'(\phi) \p_i \phi - S_\delta'(\phi) \left(f_i(\ut+\phi)-f_i(\ut)\right) + \int_0^\phi S_\delta''(\eta) \left( f_i(\ut+\eta)-f_i(\ut) \right) d\eta.
	\end{align*}
	Since $ S_\delta''\geq 0, f_1''>0, \p_1 \ut^R>0 $ and $ \abs{\phi} \leq S_\delta(\phi), $  integrating \cref{ineq-11} over $ \Omega $, together with Lemmas \ref{Lem-per} and \ref{Lem-h}, gives that
	\begin{align}
	\frac{d}{dt} \int_\Omega S_\delta(\phi) dx \leq~& C e^{-\alpha t} \int_\Omega \sum\limits_{i=1}^n \abs{ \int_0^\phi S_\delta''(\eta) \left( f_i'(\ut+\eta)-f_i'(\ut) \right) d\eta } dx + C \norm{h}_{L^1(\Omega)} \notag \\
	\leq~& C e^{-\alpha t} \int_\Omega \abs{ \int_0^\phi S_\delta''(\eta) \abs{\eta} d\eta } dx + C e^{-\alpha t} \notag \\
	\leq~& C e^{-\alpha t} \norm{\phi}_{L^1(\Omega)} + C e^{-\alpha t} \notag \\
	\leq~& C e^{-\alpha t} \int_\Omega S_\delta(\phi) dx + C e^{-\alpha t}, \notag
	\end{align}
	where $ C>0 $ is independent of $ \delta. $ Then by the Gronwall inequality and letting $ \delta \rightarrow 0+, $  one has that
	\begin{equation}\label{L-1}
	\norm{\phi(t)}_{L^1(\Omega)}  \leq C.
	\end{equation}
	For $ p \in [2,+\infty), $ multiplying \cref{equ-phi} by $\abs{\phi}^{p-2}\phi$ yields that
	\begin{equation}\label{lp}
	\begin{aligned}
	& \frac{1}{p} \p_t \abs{\phi}^{p} + \frac{4(p-1)}{p^2} \abs{\nabla \abs{\phi}^{\frac{p}{2}}}^2
	+ (p-1) \int_0^\phi \left(f'_1(\ut +\eta)-f'_1(\ut)\right) |\eta|^{p-2} \mathrm{d} \eta ~\p_1 \ut^R \\
	=~ & \sum\limits_{i=1}^n \p_i \{\cdots\} - (p-1) \int_{0}^{\phi}(f'_1(\ut +\eta)-f'_1(\ut ))|\eta|^{p-2} \mathrm{d} \eta ~\p_1 \left(\ut-\ut^R\right) \\
	& - (p-1) \sum\limits_{i=2}^n \int_{0}^{\phi}(f'_i(\ut +\eta)-f'_i(\ut ))|\eta|^{p-2} \mathrm{d} \eta ~\p_i \ut - \abs{\phi}^{p-2} \phi h,
	\end{aligned}	
	\end{equation}
	where
	\begin{align*}
	\{\cdots\} = ~& \abs{\phi}^{p-2} \phi \p_i \phi - (f_i(\ut +\phi)-f_i(\ut )) \abs{\phi}^{p-2} \phi \\
	& + (p-1) \int_{0}^{\phi} \left(f_i(\ut +\eta)-f_i(\ut )\right) |\eta|^{p-2} \mathrm{d} \eta.
	\end{align*}
	Since
	\begin{equation*}
	(p-1) \int_0^\phi (f'_1(\ut +\eta)-f'_1(\ut)) |\eta|^{p-2} \mathrm{d} \eta ~\p_1 \ut^R \geq C \p_1 \ut^R \abs{\phi}^p \geq 0,
	\end{equation*}
	then integrating \cref{lp} over $\Omega$, together with Lemma \ref{Lem-h}, gives that
	\begin{align}
	& \frac{d}{dt} \norm{\phi}^p_{L^p(\Omega)} + \norm{\nabla \abs{\phi}^{\frac{p}{2}}}^2_{L^2(\Omega)} + \norm{\left(\p_1 \ut^R\right)^{\frac{1}{p}} \phi }_{L^p(\Omega)}^p \notag \\
	\leq~ & C e^{-\alpha t} \norm{\phi}^p_{L^p(\Omega)} + C \left(\norm{\phi}^p_{L^p(\Omega)} \right)^{1-\frac{1}{p}}  \norm{h}_{L^p(\Omega)} \notag\\
	\leq~ & C e^{-\alpha t} \norm{\phi}^p_{L^p(\Omega)} + C e^{-\alpha t}. \label{lpe}
	\end{align}
	Multiplying \cref{lpe} by $(1+t)^\beta $, where $ \beta > \frac{n}{2} \left(p-1\right) $ is a constant, then integrating the resulting equation over $(0,T)$ yields that
	\begin{align}
	& (1+T)^\beta  \norm{\phi(T)}^p_{L^p(\Omega)}  + \int_0^T(1+t)^\beta \norm{\nabla \abs{\phi}^{\frac{p}{2}}}^2_{L^2(\Omega)} dt \notag \\
	\leq~ & C  \int_{0}^{T} e^{-\alpha t} (1+t)^{\beta} \norm{\phi}^p_{L^p(\Omega)} dt + C +  C  \int_{0}^{T}(1+t)^{\beta-1} \norm{\phi}^p_{L^p(\Omega)} dt \notag \\
	\leq~ & C +  C  \int_{0}^{T}(1+t)^{\beta-1} \norm{\phi}^p_{L^p(\Omega)} dt. \label{lpee} 
	\end{align} 
	It follows from \cref{Cor-inter} and \cref{L-1} that
	\begin{align}
	& C \int_0^T (1+t)^{\beta-1} \norm{\phi}^p_{L^p(\Omega)} dt \notag \\
	\leq~& C \sum\limits_{k=0}^{n-1} \int_0^T (1+t)^{\beta-1} \norm{\nabla \abs{\phi}^{\frac{p}{2}}}^{\frac{2\gamma_k p}{1+\gamma_k p}}_{L^2(\Omega)} \norm{\phi}^{\frac{p}{1+\gamma_k p}}_{L^1(\Omega)} dt \notag \\
	\leq~& \frac{1}{2} \int_0^T (1+t)^\beta \norm{\nabla \abs{\phi}^{\frac{p}{2}}}^2_{L^2(\Omega)} dt + C \left(1+T\right)^{\beta- \gamma_0 p}, \label{ineq-1}
	\end{align}
	where $ \gamma_k = \frac{k+1}{2}\left(1-\frac{1}{p}\right).$ This, together with \cref{lpee}, yields that
	\begin{align*}
	(1+T)^\beta \norm{\phi(T)}^p_{L^p(\Omega)}  + \int_0^T (1+t)^\beta  \norm{\nabla \abs{\phi}^{\frac{p}{2}}}^2_{L^2(\Omega)} dt
	\leq C + C \left(1+T\right)^{\beta-\frac{p-1}{2}}.
	\end{align*} 
	Thus \cref{aprio-1} holds for $ p\in[2,+\infty). $	
	The case for $ p\in(1,2) $ follows from the interpolation.
	
	\vspace{0.2cm}
	
	\textbf{Step 2. } We now prove \cref{aprio-2}.
	Let $ \psi_i := \p_i \phi. $ Taking the derivative on \cref{equ-phi} with respect to $x_i$ yields that
	\begin{equation*}
	\p_t \psi_i + \sum_{j=1}^{n}\partial_j \left( f'_j(\ut )\psi_i\right) + \sum_{j=1}^{n} \partial_{j} \left[\left(f'_j(\ut +\phi)-f'_j(\ut )\right) \left(\p_i \ut + \psi_i\right) \right] = \triangle \psi_{i}-\p_i h.	
	\end{equation*}
	Multiplying the result by $ \abs{\psi_i}^{p-2} \psi_i, $ we arrive at
	\begin{equation}\label{so}
	\begin{aligned}
	&\frac{1}{p} \p_t \abs{\psi_i}^p + \frac{4(p-1)}{p^2} \abs{\nabla \abs{\psi_i}^{\frac{p}{2}} }^2 + \frac{p-1}{p} f_1''(\ut) \p_1 \ut^R \abs{\psi_i}^p \\
	=~ & \sum_{j=1}^n \p_j \{\cdots\} - \frac{p-1}{p} f_1''(\ut) \p_1 \left(\ut-\ut^R\right) \abs{\psi_i}^p - \frac{p-1}{p} \sum_{j=2}^n f_j''(\ut ) \p_j \ut \abs{\psi_i}^p \\
	& + (p-1) \sum_{j=1}^n \left(f'_j(\ut +\phi)-f'_j(\ut )\right) \left(\p_i \ut + \psi_i\right) \abs{\psi_i}^{p-2} \p_j \psi_i \\
	& - \p_i \left( \abs{\psi_i}^{p-2} \psi_i h \right) + (p-1) \abs{\psi_i}^{p-2} \p_i\psi_i h,	
	\end{aligned}	
	\end{equation}
	where
	\begin{align*}
	\{\cdots\} = \abs{\psi_i}^{p-2} \psi_{i}\partial_{j}\psi_i - \frac{1}{p} f_j'(\ut ) \abs{\psi_i}^p  - \left(f'_j(\ut +\phi)-f'_j(\ut )\right) \left(\p_i \ut + \psi_i\right) \abs{\psi_i}^{p-2} \psi_i.
	\end{align*}
	Then integrating (\ref{so}) over $\Omega$ and using \cref{Lem-h} yield  that
	\begin{equation}\label{ineq-8}
	\begin{aligned}
	& \frac{d}{dt} \norm{\psi_{i}}_{L^p(\Omega)}^p + \norm{\nabla \abs{\psi_i}^{\frac{p}{2}}}_{L^2(\Omega)}^2 + \norm{\left(\p_1 \ut^R\right)^{\frac{1}{p}} \psi_i}^p_{L^p(\Omega)} \\
	\leq~ & C e^{-\alpha t} \norm{\psi_{i}}_{L^p(\Omega)}^p + \underbrace{ C \int_{\Omega} \abs{\p_i \ut} \abs{\phi} \abs{\psi_i}^{p-2} \abs{\nabla\psi_{i}} dx}_{I_1} \\
	& + \underbrace{ C \int_{\Omega}  \abs{\phi} \abs{\psi_i}^{p-1} \abs{\nabla\psi_{i}} dx}_{I_2} + \underbrace{ C \int_\Omega \abs{\psi_i}^{p-2} \abs{\nabla \psi_i} \abs{h} dx }_{I_3}.
	\end{aligned}
	\end{equation}
	First, it follows from Lemma \ref{Lem-h} that
	\begin{align}
	I_3 & = C \int_\Omega \abs{ \nabla \abs{\psi_i}^{\frac{p}{2}}} \left( e^{-\alpha t} \abs{\psi_i}^p \right)^{\frac{p-2}{2p}} e^{\frac{p-2}{2p} \alpha t} \abs{h} dx  \notag \\
	& \leq \frac{1}{8} \norm{\nabla \abs{\psi_i}^{\frac{p}{2}}}_{L^2(\Omega)}^2 + C e^{-\alpha t} \norm{\psi_i}_{L^p(\Omega)}^p + C e^{-\alpha \left( \frac{p}{2}+1 \right) t}, \label{I-3}
	\end{align}
	where the H\"{o}lder inequality for $ \frac{1}{2} + \frac{p-2}{2p} + \frac{1}{p} = 1 $ is used. Following \cite{KNM2004}, one can claim that
	\begin{equation}\label{claim}
	\int_{\Omega} \abs{\phi} \abs{\psi_i}^{\frac{p}{2}} \abs{\nabla \abs{\psi_{i}}^\frac{p}{2}} dx \leq C \sum\limits_{k=0}^{n-1} \norm{\phi}_{L^{2(k+1)}(\Omega)} \norm{\psi_i}_{L^p(\Omega)}^{\frac{p}{4}} \norm{ \nabla \abs{\psi_i}^{\frac{p}{2}} }^{\frac{3}{2}}_{L^2(\Omega)}.
	\end{equation}	
	In fact, we first decompose $ v := \abs{\psi_i}^{\frac{p}{2}} = \sum\limits_{k=0}^{n-1} v^{(k)} $ as in \cref{Thm-G-N}, then 
	\begin{align*}
	& \int_{\Omega} \abs{\phi} \abs{\psi_i}^{\frac{p}{2}} \abs{\nabla \abs{\psi_{i}}^\frac{p}{2}} dx \leq \sum\limits_{k=0}^{n-1} \int_{\Omega} \abs{\phi} \abs{v^{(k)}} \abs{\nabla v} dx  \\
	\leq~& \norm{\phi}_{L^2(\Omega)} \norm{v^{(0)}}_{L^\infty(\R)} \norm{\nabla v}_{L^2(\Omega)} + \norm{\phi}_{L^4(\Omega)} \norm{v^{(1)}}_{L^4(\Omega)} \norm{\nabla v}_{L^2(\Omega)}  \\
	& + \sum\limits_{k=2}^{n-1} \norm{\phi}_{L^{2(k+1)}(\Omega)} \norm{\abs{v^{(k)}}^{\frac{1}{2}}}_{L^4(\Omega)} \norm{\abs{v^{(k)}}^{\frac{1}{2}}}_{L^{\frac{4(k+1)}{k-1}}(\Omega)}
	\norm{\nabla v}_{L^2(\Omega)}  \\
	\leq~& \sum\limits_{k=0}^{n-1} 
	\norm{\phi}_{L^{2(k+1)}(\Omega)} \norm{v}_{L^2(\Omega)}^{\frac{1}{2}} \norm{\nabla v}^{\frac{3}{2}}_{L^2(\Omega)},
	\end{align*}
	which yields \cref{claim}. Then combining \cref{aprio-1} and \cref{claim}, the term $ I_2 $ in \cref{ineq-8} satisfies that
	\begin{align}
	I_2 
	& \leq C (1+t)^{-\frac{1}{4}} \norm{\psi_i}_{L^p(\Omega)}^{\frac{p}{4}} \norm{ \nabla \abs{\psi_i}^{\frac{p}{2}} }^{\frac{3}{2}}_{L^2(\Omega)} \notag \\
	& \leq C (1+t)^{-1} \norm{\psi_i}_{L^p(\Omega)}^{p} + \frac{1}{8} \norm{ \nabla \abs{\psi_i}^{\frac{p}{2}} }^{2}_{L^2(\Omega)}. \label{ineq-I2}
	\end{align}
	For $ I_1, $ if $ i \neq 1, $ it follows from the H\"{o}lder inequality and \cref{aprio-1} that
	\begin{equation}\label{ineq-9}
	\begin{aligned}
	I_1 & \leq C e^{-\alpha t} \int_\Omega \abs{\phi} \abs{\psi_i}^{\frac{p-2}{2}}\abs{\psi_i}^{\frac{p-2}{2}} \abs{\nabla \psi_i} dx  \\
	& \leq C e^{-\alpha t} \norm{\phi}_{L^p(\Omega)} \norm{\psi_i}_{L^p(\Omega)} \norm{\nabla \abs{\psi_i  }^{\frac{p}{2}} }_{L^2(\Omega)} \\
	& \leq C e^{-\alpha t} + C e^{-\alpha t} \norm{\psi_i}_{L^p(\Omega)}^p + \frac{1}{8} \norm{\nabla \abs{\psi_i  }^{\frac{p}{2}}}_{L^2(\Omega)}^2.
	\end{aligned}
	\end{equation}
	If $ i=1, $ then
	\begin{equation}\label{ineq-10}
	\begin{aligned}
	I_1 \leq~ & C \int_\Omega \p_1 \ut^R \abs{\phi} \abs{\psi_1}^{p-2} \abs{\nabla \psi_1} dx + C e^{-\alpha t} \int_\Omega \abs{\phi} \abs{\psi_1}^{p-2} \abs{\nabla \psi_1} dx,
	\end{aligned}
	\end{equation}
	where the second term on the RHS of \cref{ineq-10} can be estimated in the same way as in \cref{ineq-9}. For the first term, one can use \cref{Lem-II-der} to obtain that
	\begin{align}
	\int_\Omega \p_1 \ut^R \abs{\phi} \abs{\psi_1}^{p-2} \abs{\nabla \psi_1} dx & \leq C \norm{\p_1 \ut^R}_{L^\infty(\Omega)} \norm{\phi}_{L^p(\Omega)}^{\frac{2p}{p+2}} \norm{\nabla \abs{\psi_1}^{\frac{p}{2}}}_{L^2(\Omega)}^{\frac{2p}{p+2}} \notag \\
	&  \leq C \norm{\p_1 \ut^R}_{L^\infty(\Omega)}^{\frac{p+2}{2}} \norm{\phi}_{L^p(\Omega)}^{p} + \frac{1}{8} \norm{\nabla \abs{\psi_1}^{\frac{p}{2}}}_{L^2(\Omega)}^{2}. \label{ineq-12}
	\end{align}
	Thus, collecting \cref{I-3,ineq-I2,ineq-9,ineq-10,ineq-12} and applying \cref{entropy}, one has that
	\begin{equation}\label{ineq-13}
	\frac{d}{dt} \norm{\psi_{i}}_{L^p(\Omega)}^p + \norm{\nabla \abs{\psi_i}^{\frac{p}{2}}}_{L^2(\Omega)}^2 
	\leq C e^{-\alpha t} + C (1+t)^{-1} \norm{\psi_i}_{L^p(\Omega)}^{p} +  C (1+t)^{-\frac{p+2}{2}} \norm{\phi}_{L^p(\Omega)}^{p}.
	\end{equation}
	Multiplying \cref{ineq-13} by $ (1+t)^\beta $ with $\beta>p+1$ and then integrating the result over $ [0,T], $ one has that
	\begin{align}
	& (1+t)^\beta \norm{\psi_i(t)}_{L^p(\Omega)}^p + \int_0^T  (1+t)^\beta \norm{\nabla \abs{\psi_i}^{\frac{p}{2}}}_{L^2(\Omega)}^2 dt \notag \\
	\leq~ & C + C \int_0^T (1+t)^{\beta-1} \norm{\psi_i(t)}_{L^p(\Omega)}^p dt + C \int_0^T (1+t)^{\beta-\frac{p+2}{2}}  \norm{\phi}_{L^p(\Omega)}^{p} dt. \label{ineq-14}
	\end{align}
	It follows from \cref{Lem-II-der} that
	\begin{align*}
	&  \int_0^T (1+t)^{\beta-1} \norm{\psi_i(t)}_{L^p(\Omega)}^p dt \notag \\
	\leq~ & C \int_0^T (1+t)^{\beta-1} \norm{\nabla \abs{\psi_i}^\frac{p}{2}}_{L^2(\Omega)}^{\frac{2p}{p+2}} \norm{\phi}_{L^p(\Omega)}^{\frac{2p}{p+2}} dt \notag \\
	\leq~ & \frac{1}{2} \int_0^T  (1+t)^\beta \norm{\nabla \abs{\psi_i}^\frac{p}{2}}_{L^2(\Omega)}^2 dt + C \int_0^T (1+t)^{\beta-\frac{p+2}{2}}  \norm{\phi}_{L^p(\Omega)}^p dt.
	\end{align*}
	From \cref{aprio-1}, one can get that
	\begin{align*}
	\int_0^T (1+t)^{\beta-\frac{p+2}{2}} \norm{\phi(t)}_{L^p(\Omega)}^p dt & \leq C \int_0^T (1+t)^{\beta-\frac{p+2}{2}}  (1+t)^{-\frac{p-1}{2}} dt \notag \\
	& \leq C (1+T)^{\beta-p+\frac12}.
	\end{align*}
	The proof of \cref{Thm-apriori} is finished.
\end{proof}

\vspace{0.3cm}

\begin{proof}[Proof of \cref{Thm}]
	It follows from Theorems \ref{Thm-G-N} and \ref{Thm-apriori} that
	\begin{align*}
	\norm{\phi}_{L^\infty(\Omega)}
	& \leq C \sum\limits_{k=0}^{n-1} \norm{\nabla \phi}_{L^{p_k}(\Omega)}^{\theta_k} \norm{\phi}_{L^{q_k}(\Omega)}^{1-\theta_k} \notag \\
	& \leq C  \sum\limits_{k=0}^{n-1} (1+t)^{-\left[ \left(1-\frac{1}{2p_k}\right) \theta_k + \left(\frac{1}{2}-\frac{1}{2q_k}\right) (1-\theta_k) \right]},
	\end{align*}
	where $ 0 = \left(\frac{1}{p_k} - \frac{1}{k+1}\right) \theta_k + \frac{1}{q_k} (1-\theta_k), $ $ \max\{k+1,2 \} < p_k <+\infty $ and $ 1\leq q_k <+\infty $ for $ k=0,1,\cdots,n-1$. A direct calculation yields that
	\begin{align*}
	\left(1-\frac{1}{2p_k}\right) \theta_k + \left(\frac{1}{2}-\frac{1}{2q_k}\right)(1-\theta_k) = \frac{1}{2} + \frac{k\theta_k}{2(k+1)} \geq \frac{1}{2},
	\end{align*}
	which implies that
	\begin{equation*}
	\norm{\phi}_{L^\infty(\R^n)} = \norm{\phi}_{L^\infty(\Omega)} \leq C (1+t)^{-\frac{1}{2}}.
	\end{equation*}
	Thus, 
	\begin{align*}
	\norm{u-\ut^R}_{L^\infty(\R^n)} \leq \norm{\phi}_{L^\infty(\R^n)} + \norm{\ut-\ut^R}_{L^\infty(\R^n)} \leq C (1+t)^{-\frac{1}{2}}.
	\end{align*}

\end{proof}


\vspace{1cm}

\vspace{1.5cm}

\end{document}